\definecolor{darkblue}{rgb}{0.3,0.3,0.7}
\DeclareMathAlphabet{\eufrak}{U}{}{}{}  
\SetMathAlphabet\eufrak{normal}{U}{euf}{m}{n}
\SetMathAlphabet\eufrak{bold}{U}{euf}{b}{n}
\newtheorem{prop}{Proposition}[section]
\newtheorem{theorem}[prop]{Theorem}
\newtheorem{lemma}[prop]{Lemma}
\theoremstyle{definition}
\newtheorem{remark}[prop]{Remark}
\newtheorem{definition}[prop]{Definition}
\newtheorem{notation}[prop]{Notation}
\numberwithin{equation}{section}
\def\E{\mathbb{E}}
\def\P{\mathbb{P}}
\def\real{\mathbb{R}}
\def\1{\textbf{1}}
\def\ind#1{\textbf{1}_{\left\{#1\right\}}}
\def\X{\mathbb{X}}
\def\1{\textbf{1}}
\def\ind#1{\textbf{1}_{\{#1\}}}
\def\E{\mathbb{E}}
\def\P{\mathbb{P}}
\def\real{\mathbb{R}}
\def\T{{T \wedge \tau}}
\def\bN{\bold{N}}
\def\btN{\bold{\tilde{N}}}
\def\bO{\boldsymbol{\omega}}
\def\bX{\bold{X}}
\def\br{\boldsymbol{\rho}}
\def\bx{\bold{x}}
\def\by{\bold{y}}
\def\bD{\bold{D}}
\def\bpI{\boldsymbol{\mathcal I}}
\def\T{\mathcal T}
\newcommand{\be}{\begin{equation}}
\newcommand{\ee}{\end{equation}}
\newcommand{\bde}{\begin{displaymath}}
\newcommand{\ede}{\end{displaymath}}
\newcommand{\beq}{\begin{eqnarray*}}
\newcommand{\eeq}{\end{eqnarray*}}
\newcommand{\beqa}{\begin{eqnarray}}
\newcommand{\eeqa}{\end{eqnarray}}
\newcommand{\bel }{\left\{\begin{array}{ll}}
\newcommand{\eel}{\cr \end{array} \right.}
\newcommand{\bex}{\begin{ex} \rm }
\newcommand{\eex}{\end{ex}}
{



\def\E{\mathbb E}

\def\P{\mathbb P}



\definecolor{ying}{rgb}{0.8, 0.0, 0.04}

\DeclareSymbolFontAlphabet{\mathrsfs}{rsfs}

\author{Caroline Hillairet\footnote{ENSAE  Paris, CREST UMR 9194,
5  avenue Henry Le Chatelier
91120 Palaiseau, France.  Email: \texttt{caroline.hillairet@ensae.fr}} \and Thomas Peyrat\footnote{ENSAE  Paris, CREST UMR 9194, and Exiom Parners,  26 rue Notre Dame des Victoires, 75002 Paris, France.  \; Email: \texttt{thomas.peyrat@ensae.fr}} \and Anthony R\'eveillac\footnote{INSA de Toulouse, IMT UMR CNRS 5219, Universit\'e de Toulouse, 135 avenue de Rangueil 31077 Toulouse Cedex 4 France. \; Email: \texttt{anthony.reveillac@insa-toulouse.fr}} }

\title{ Poisson imbedding meets the Clark-Ocone formula  \footnote{{Caroline Hillairet's research benefited from the support of  the Chair Stress Test, RISK Management and Financial Steering , led by the French Ecole Polytechnique and its Foundation and sponsored by BNP Paribas.}}}

\begin{document}

\maketitle

\allowdisplaybreaks

\begin{abstract}
\noindent In this paper we develop a representation formula of Clark-Ocone type for any integrable Poisson functionals, which extends the Poisson imbedding for point processes. This representation formula differs from the classical Clark-Ocone formula on three accounts. First the representation holds with respect to the Poisson measure instead of the compensated one;  {second} the representation holds true in $L^1$ and not in $L^2$; and finally contrary to the classical Clark-Ocone formula the integrand is defined as a pathwise operator and not as a $L^2$-limiting object. We make use of  Malliavin's calculus and of the pseudo-chaotic decomposition with uncompensated iteraded integrals 
to establish  this Pseudo-Clark-Ocone representation formula and to characterize  the integrand,   which turns out to be  a predictable integrable process.
\end{abstract}

\noindent
\textbf{Keywords:} Hawkes processes; Poisson imbedding representation; Malliavin's calculus; Clark-Ocone formula. \\
\noindent
\textbf{Mathematics Subject Classification (2020):} 60G55; 60G57; 60H07.

\section{Introduction}
\label{section:intro}
Martingale representation formulas for Poisson functionals {have} been widely investigated in the literature (such as L{\o}kka \cite{lokka2004martingale}, Nualart and Schoutens \cite{nualart2000chaotic}) {in relation with stochastic analysis tools such as the chaotic expansion or the Malliavin calculus.} At the crossroad of martingale representation and the Malliavin calculus lies the so-called Clark-Ocone\footnote{also referred as Clark-Ocone-Haussmann formula} formula which allows one to provide a description of the integrand in terms of the Malliavin derivative. More precisely, on the Poisson space, the Clark-Ocone formula gives the following representation of a square-integrable random variable $F$ :
$$ F = \E[F] + \int_{\real_+ }    ( D_{t}  F  )^p \; \tilde{N}(dt)$$
where $ \tilde{N}$  is the compensated Poisson process and  $ (D_{t} F)^p$ is the predictable projection of the Malliavin derivative of $F$. {It is important to note that in nature the Clark-Ocone formula and the martingale representation slightly differ as the latter may exist when the former fails to hold (like for instance for the fractional Brownian motion which is not a semimartingale with respect to its natural filtration). In addition the Clark-Ocone formula is restricted to representation of random variables whereas martingale representation provides a dynamic representation of the integrand for a given martingale. Yet in a Poisson framework both representations are intricated.}
Various kinds of generalizations have been obtained.
Using the Malliavin integration by parts formula given in Picard \cite{Picard_French_96}, Zhang  \cite{zhang2009clark} establishes the following Clark-Ocone formula: for any bounded Poisson functional $F$ 
$$ F = \E[F] + \int_{\real_+ \times \X}    \left({ \bD_{(t,x)} F} \right)^p\; \tilde{\bN}(dt,dx)$$
where $ \tilde{\bN}$  is the compensated Poisson random measure,  and $ \left({ \bD_{(t,x)} F} \right)^p$  is the predictable projection of the Malliavin derivative   of $F$.
Last and Penrose \cite{last2011martingale}  also give a Clark-Ocone type martingale representation formula when the underlying filtration is generated by a Poisson process on a measurable space.
Flint and Torrisi \cite{TorrisiCO} provide  a Clark-Ocone formula for   point processes on a finite interval possessing a conditional intensity.
Di Nunno and Vives \cite{nunno2017malliavin} develop a Malliavin-Skorohod type calculus for additive processes 
 and obtain a generalization of the Clark-Ocone formula for random variables in $L^1$ { whose integrand operator is supposed to be in $L^1$ as well.} 
At the same time, the {so-called} Poisson imbedding (see Jacod \cite[Chapter 4]{Jacod_1979} or Br\'emaud and Massouli\'e \cite{Bremaud_Massoulie}) provide{s} also a representation of any point  process with respect to an  uncompensated  Poisson measure. More precisely, 
if $H$ is a point process on $\real_+$ with {stochastic}  intensity $\lambda$, then   these pair of processes $(H,\lambda)$ can be represented on a probability space $(\Omega,\mathcal F,\P)$ supporting a random Poisson measure $\bN$ on $\real_+ \times \real_+$ and the following representation holds true : 
\begin{equation}\label{imbedding}
H_T = \int_{(0,T]\times \real_+} \ind{\theta\leq \lambda_t} \bN(dt,d\theta); \quad \forall T>0. 
\end{equation}
Yet  Hillairet and R\'eveillac \cite{Nouspseudochaotic}   investigate the so called pseudo-chaotic expansion which involves also non-compensated  iterated  integrals. This recent contribution leads the way to the study of a Clark-Ocone representation formula with respect to the non-compensated Poisson measure $ \bN$, which in a sense will generalize the Poisson imbedding relation \eqref{imbedding}.
The aim of this  paper is to  investigate which Poisson-functionals admit such  Pseudo-Clark-Ocone decomposition  and to  determine the integrand.\\\\
More precisely, we consider a   Poisson random measure  $\bN$  defined on  $(\bX,\mathfrak X):=([0,T] \times \mathbb X; \mathcal B(\real_+)\otimes \mathcal X)$  equipped with a non-atomic $\sigma$-finite measure $\br:=dt \otimes \pi$, and  where $(\mathbb X,\mathcal X)$  is a complete separable metric  space. 
The Pseudo-Clark-Ocone formula is  stated  in Theorem \ref{th:pseudoCOfinite} for any $F$ in $L^1(\Omega)$ 
(under the assumption that $\br(\bX)<+\infty$) :
\begin{equation}\label{PCO}
 F = F(\bO_\emptyset) + \int_{[0,T] \times \X} \mathcal{H}_{(t,x)} F \; \bN(dt,dx), \; \P-a.s..
 \end{equation}
The operator 
$\mathcal H F$ is defined as follows:
$$ \mathcal{H}: \begin{array}{ll} L^0(\Omega) &\to L^0(\Omega\times \bX) \\ F &\mapsto \mathcal{H}_{(t,x)} F  := \bD_{(t,x)}F \circ \tau_t  \end{array}$$
where $ \bD_{(t,x)}$ is the Malliavin derivative operator and 
$$
\tau_t: \begin{array}{ll}
\; \; \; \; \; \; \Omega &\to \; \; \; \; \; \;  \Omega\\
\bO =  \sum_{j=1}^{n} \delta_{{(t_j,x_j)}}  & \mapsto \bO_t:= \sum_{j=1}^{n} \delta_{{(t_j,x_j)}} \ind{t_j < t}.
\end{array} 
$$
Let us emphasize that this Pseudo-Clark-Ocone formula holds for any $F$ in $L^1(\Omega)$ and its integrand $\mathcal H F$ is well defined pathwise and is proved to belong to $ \mathbb L^1_{\mathcal P}$ (see Theorems \ref{th:pseudoCOfinite} and \ref{t{th:pseudoCOinfinite}}). This is a major difference with the standard Clark-Ocone formula that requires $F \in L^2(\Omega)$ and for which the integrand $\left({ \bD F} \right)^p$ is only defined as a limit in $L^2(\Omega\times \bX)$ (see Remark \ref{COlimitL2}).\\ 
 
The paper is organized as follows. Notations and the description of the Poisson space and elements of Malliavin's calculus are presented in Section \ref{sec:Malliavin},  as well as the operators and iterated integrals of the (pseudo)-chaotic expansions.  The main contribution is stated in Section \ref{sec:main}, which  recalls the standard and pseudo chaotic expansions and then derives  the Clark-Ocone formula with respect to the Poisson measure. Finally Section \ref{sseclemmata} gathers some technical lemmata. 

\section{Elements of Malliavin's calculus on the Poisson space}\label{sec:Malliavin}
We introduce in this section some elements and notions of stochastic analysis on a general Poisson space. All the elements presented in this section are taken from Last \cite{Last2016,Last:2011aa}.

\subsection{The Poisson space}
$\mathbb{N}^*:=\mathbb{N} \setminus \{0\}$ denotes the set of positive integers and for any finite set $S$,  $|S|$ denotes  its cardinal. 
We fix $(\mathbb X,\mathcal X)$ a complete separable metric space equipped with a non-atomic $\sigma$-finite measure $\pi$ and we set $(\bX,\mathfrak X):=([0,T] \times \mathbb X; \mathcal B([0,T])\otimes \mathcal X)$ equipped with the non-atomic $\sigma$-finite measure $\br:=dt \otimes \pi$ with $dt$ the Lebesgue measure on $\real_+$ and  $T$  is a fixed positive real number. Throughout this paper we will make use of the following notation : 
\begin{notation}
We denote with bold letters $\bx$ elements in $\bX$ and for $\bx \in \bX$ we set $\bx:=(t,x)$ with $t\in \real_+$ and $x\in \mathbb X$. 
\end{notation}
\noindent
We define $\Omega$ the space of configurations on $\bX$ as 
\begin{equation}
\label{eq:Omega}
\Omega:=\left\{\bO=\sum_{j=1}^{n} \delta_{\bx_j}, \; \bx_j \in \bX, \;j=1,\ldots,n, \; n\in \mathbb{N}\cup\{+\infty\} \right\}.
\end{equation}

\begin{notation}
\label{notation:emptyset}
We also denote by $\bO_\emptyset \in \Omega$  the unique element $\bO$ of $\Omega$ such that $\bO(A\times B)=0$ for any $A\times B \in \mathfrak X$.
\end{notation}

\noindent
Let $\mathcal F$ be the $\sigma$-field associated to the vague topology on $\Omega$. Let $\P$ the Poisson measure on $ \Omega$ under which 
the canonical evaluation $\bN$ defines a Poisson random measure with intensity measure $\br$. To be more precise given any element $A\times B$ in $ \mathfrak X$ with $\br(A\times B)>0$, the random variable
$$(\bN(\bO))(A\times B):= \bO(A\times B)$$
is a Poisson random variable with intensity $\br(A\times B)$  and $\btN$ defined as 
$$(\btN(\bO))(A\times B):= \bO(A\times B)-\br(A\times B)$$
is the compensated Poisson measure.
We define the natural history associated to $\bN$
$$ \mathcal F^{\bN}_t:=\sigma\{\bN(A\times B); (A,B) \in \mathfrak X; A\subset [0,t]\} $$
and $\mathcal{F}^{\bN}:=\sigma\{\bN(A\times B); (A,B) \in \mathfrak X\}$. 
For any $F \in L^1(\Omega,\mathcal F^{\bN})$ and any $t\geq 0$ we denote $ \E_t[F]:=\E[F\vert \mathcal F_t^{\bN}].$
We also set for any $t>0$,
$$ \mathcal F^{\bN}_{t-}:= \bigvee_{0<s<t} \mathcal F^{\bN}_{s}. $$
We set $\mathcal P^\bN$ the predictable $\sigma-$field associated to $\mathcal F^{\bN}$ that is the $\sigma$-field on $[0,T]\times \Omega$ generated by left-continuous $\mathcal F^{\bN}$-adapted stochastic processes and we set $\mathcal P:=\mathcal P^\bN \otimes \mathcal B(\X)$ the set of real-valued predictable processes $X:\Omega \times [0,T]\times \X$. Let 
$$ \mathbb L^0_{\mathcal P} :=\left\{X:\Omega \times [0,T]\times \X \textrm{ which are } \mathcal P\textrm{ -measurable} \right\},$$
and for $r > 0$
$$ \mathbb L^r_{\mathcal P} :=\left\{X \in \mathbb L^0_{\mathcal P}, \;\|X\|_{\mathbb L^r_{\mathcal P}}<+\infty\right\},
\mbox{  with  }  \|X\|_{\mathbb L^r_{\mathcal P}}:=\E\left[\int_{[0,T]\times \X} |X_{(t,x)}|^r dt\pi(dx)\right]^{1/r}.$$
According to Jacod \cite{Jacod_75} {(see also Protter \cite[Corollary 3]{Protter90})}, for any $X \in \mathbb L^2_{\mathcal P}$, the process $t\mapsto \int_{[0,t]\times \X} X_{(r,x)} \tilde{\bN}(dr,dx)$ is a $\mathcal F^\bN$-martingale which satisfies the following $L^2$-isometry
\begin{equation}
\label{eq:Isometry}
\E\left[\left|\int_{[0,t]\times \X} X_{(r,x)} \tilde{\bN}(dr,dx)\right|^2\right]=  \E\left[\int_{[0,t]\times \X} \left|X_{(r,x)}\right|^2 dr \pi(dx)\right], \; \forall t \in [0,T].
\end{equation}

\subsection{Add-points operators and the Malliavin derivative}

We introduce some elements of Malliavin's calculus on Poisson processes. We set 
$$ L^0(\Omega):=\left\{ F:\Omega \to \real, \; \mathcal{F}^{\bN}-\textrm{ measurable}\right\},$$
$$ L^2(\Omega):=\left\{ F \in L^0(\Omega), \; \E[|F|^2] <+\infty\right\}.$$
Similarly
$$L^0(\bX^j) := \left\{f:\bX^j \to \real, \; \mathfrak X^{\otimes j}-\textrm{measurable } \right\}$$
 and for $r\in\{1,2\}$, for $j\in \mathbb{N}^*$
\begin{equation}
\label{definition:L2j}
L^r(\bX^j) := \left\{f \in L^0(\bX^j), \; {\|f\|_{L^r(\bX^j)}^r:=} \int_{\bX^j} |f(\bx_1,\cdots,\bx_j)|^r \br^{\otimes j}(d\bx_1 \cdots d\bx_j) <+\infty\right\}.
\end{equation}
Besides,
\begin{equation}
\label{definition:L2js}
L^r_s(\bX^j) := \left\{f \in L^r(\bX^j) \textrm{ and } f \textrm{ is symmetric} \right\}
\end{equation}
is the space of square-integrable symmetric mappings where we recall that $f:\bX^j \to \real$ is said to be symmetric if for any element $\sigma$ in $\mathcal S_j$ (the set of all  permutations of $\{1,\cdots,j\}$), 
$$ f(\bx_1,\ldots, \bx_j) = f(\bx_{\sigma(1)},\ldots, \bx_{\sigma(j)}), \quad \forall (\bx_1,\ldots,\bx_j) \in \bX^j.  $$
Given $f:\bX^j \to \real$, we write $\tilde f$ its symmetrization defined as : 
$$ \tilde f(\bx_1,\ldots, \bx_j):= \frac{1}{n!} \sum_{\sigma \in \mathcal S_j} f(\bx_{\sigma(1)},\ldots, \bx_{\sigma(j)}).$$

\noindent
The main ingredient we will make use of are the add-points operators on the Poisson space $\Omega$. 

\begin{definition}$[$Add-points operators$]$\label{definitin:shifts}
Given $n\in \mathbb N^*$, and $J:=\{\bx_1,\ldots,\bx_n\} \subset \bX$ a subset of $\bX$ with $|J|=n$,
we set the measurable mapping :
\begin{eqnarray*}
\varepsilon_{J}^{+,n} : \Omega & \longrightarrow & \Omega  \\
     \bO & \longmapsto   & \bO + \sum_{\bx \in J} \delta_{\bx} \ind{\bO(\{\bx\})=0}.
\end{eqnarray*}
Note that by definition 
$$  \bO + \sum_{\bx \in J} \delta_{\bx} \ind{\bO(\{\bx\})=0} = \bO + \sum_{j=1}^{n} \delta_{\bx_j} \ind{\bO(\{\bx_j\})=0} $$
that is we add the atoms $\bx_j$ to the path $\bO$ unless they already were part of it (which is the meaning of the term $\ind{\bO(\{\bx_j\})=0}$). Note that since $\br$ is assumed to be atomless, given a set $J$ as above, $\P[\bN(J)=0]=1$ hence in what follows we will simply write $\bO + \sum_{j=1}^{n} \delta_{\bx_j}$ for $\varepsilon_{\bold{x}}^{+,n} (\bO)$.  
\end{definition}

\noindent We now recall the Malliavin derivative operator. 

\begin{definition}
\label{definition:Dn}
For $F$ in ${L^0(\Omega)}$, $n\in \mathbb N^*$, $(\bx_1,\ldots,\bx_n) \in \bX^n$, we set 
\begin{equation}
\label{eq:Dn}
\bold{D}_{(\bx_1,\ldots,\bx_n)}^{n} F:= \sum_{J \subset \{\bx_1,\ldots,\bx_n\}} (-1)^{n-|J|} F\circ \varepsilon_{J}^{+,|J|},
\end{equation}
where we recall that $\emptyset \subset \bX$.
For instance when $n=1$, we write $\bold{D}_{\bx} F := \bold{D}_{\bx}^1 F = F(\cdot + \delta_{\bx}) - F(\cdot)$ which is the difference operator (also called add-one cost operator\footnote{see \cite[p.~5]{Last2016}}). Relation (\ref{eq:Dn}) rewrites as 
$$ \bold{D}^n_{(\bx_1,\ldots,\bx_n)} F (\bO)= \sum_{J\subset \{1,\ldots,n\}} (-1)^{n-|J|} F\left(\omega + \sum_{j \in J} \delta_{\bx_j}\right), \quad \textrm{ for a.e. } \bO \in \Omega.$$
Note that with this definition, for any $\bO$ in $\Omega$, the mapping 
$$ (\bx_1,\ldots,\bx_n) \mapsto \bold{D}_{(\bx_1,\ldots,\bx_n)}^n F (\bO) $$
belongs to $L^0_s(\bX^j)$ defined as (\ref{definition:L2js}) and in addition the mapping 
\begin{equation}
\label{eq:Tn}
T_n F : (\bx_1,\ldots,\bx_n) \mapsto \E[\bold{D}_{(\bx_1,\ldots,\bx_n)}^n F] 
\end{equation}
is well-defined and belongs to $L^2_s(\bX^j)$ for any $F$ in $L^2(\Omega)$ (see \cite{Last2016,Last:2011aa}).
\end{definition}

\noindent
We recall the following property  (see \textit{e.g.} \cite[Relation (15)]{Last2016}) :
for  $F$ in $L^2(\Omega)$, $n\in \mathbb N^*$, and $(\bx_1,\ldots,\bx_n) \in \bX^n$, the $n$th iterated Malliavin's derivative operator $\bold{D}^n$ satisfies 
\begin{equation}\label{prop:Dnaltenative}
 \bold{D}^n F = \bold{D} (\bold{D}^{n-1} F), \quad n\geq 1; \quad \bold{D}^0 F:=F.
\end{equation}
\begin{remark}
\label{rk:Dderm}
If $F$ is deterministic, then by definition $\bD^n F=0$ for any $n\geq 1$.
\end{remark}

\subsection{Factorial measures and iterated integrals}

We first introduce a purely deterministic operator that  will be at the core of the pseudo-chaotic expansion. 

\begin{definition}
\label{definition:patwisederivative}
For $F\in L^0(\Omega)$, we define the deterministic operators: 
$$ \T_0 F:= F(\bO_\emptyset),$$ and for  $n\in \mathbb N^*$,  $(\bx_1,\cdots,\bx_n) \in \bX^n$,
$$ \T_n F (\bx_1,\cdots,\bx_n) := \sum_{J\subset \{\bx_1,\cdots,\bx_n\}} (-1)^{n-|J|} F\left(\sum_{\by \in J} \delta_{\by}\right).$$
\end{definition}

\noindent
In particular, even though $F$ is a random variable, $\T_n F{(\bx_1,\cdots,\bx_n)} $ is a real number as each term $F(\varpi_{J})$
 is the evaluation of $F$ at the outcome  $\varpi_{J}$
(where  $\varpi_{J}$ denotes $\sum_{\by \in J} \delta_{\by}$).
Besides,  given  the event $\{ \bN(\bX)=0 \}$, $\T_n F{(\bx_1,\cdots,\bx_n)} $ coincides with $  \bold{D}^n_{(\bx_1,\ldots,\bx_n)} F $ and  $ \T_0 F$ coincides with $F$.

\begin{prop}(Factorial measures; See \textit{e.g.} \cite[Proposition 1]{Last2016}).
\label{prop:factormeas}
There exists a unique sequence of counting random measures $(\bN^{(m)})_{m\in \mathbb N^*}$ where for any $m$, $\bN^{(m)}$ is a counting random measure on $(\bX^m, \mathfrak X^{\otimes m})$ with
\begin{align*}
&\bN^{(1)}:=\bN  \quad \mbox{ and \quad  for } \quad  A \in \mathfrak X^{m+1},\\
&\bN^{(m+1)}(A) \\ 
&:= \int_{\mathfrak X^m} \left[ \int_\mathfrak X \ind{(\bx_1,\ldots,\bx_{m+1})\in A} \bN(d\bx_{m+1}) - \sum_{j=1}^m \ind{(\bx_1,\ldots,\bx_{m},\bx_j)\in A} \right] \bN^{(m)}(d\bx_1,\ldots,d\bx_m); 
\end{align*}
\end{prop}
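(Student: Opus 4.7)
The plan is to proceed by induction on $m$, establishing existence and uniqueness simultaneously. Uniqueness is immediate: the recursion expresses $\bN^{(m+1)}$ explicitly in terms of $\bN^{(m)}$ and $\bN$, so once we impose $\bN^{(1)}=\bN$ the whole sequence is determined pathwise on $\Omega$.

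For existence, I would fix a configuration $\bO=\sum_{i\in I}\delta_{\bx_i}$ (with $I$ at most countable) and prove by induction on $m$ the pathwise identity
$$\bN^{(m)}(\bO)(A)=\sum_{(i_1,\ldots,i_m)\in I^m_{\neq}}\ind{(\bx_{i_1},\ldots,\bx_{i_m})\in A},\qquad A\in\mathfrak X^{\otimes m},$$
where $I^m_{\neq}$ denotes the set of $m$-tuples of pairwise distinct indices from $I$. The base case $m=1$ is the definition $\bN^{(1)}=\bN$. For the inductive step, feeding the induction hypothesis into the recursion, the inner integral $\int_{\mathfrak X}\ind{(\bx_1,\ldots,\bx_{m+1})\in A}\,\bN(d\bx_{m+1})$ replaces $\bx_{m+1}$ by each atom $\bx_{i_{m+1}}$ with $i_{m+1}\in I$ \emph{without} any distinctness constraint, while the correction $-\sum_{j=1}^{m}\ind{(\bx_1,\ldots,\bx_m,\bx_j)\in A}$, once evaluated at the distinct tuple $(\bx_{i_1},\ldots,\bx_{i_m})$, removes exactly the contributions where $i_{m+1}\in\{i_1,\ldots,i_m\}$. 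What remains is precisely the sum over $(i_1,\ldots,i_{m+1})\in I^{m+1}_{\neq}$, which is a counting measure on $(\bX^{m+1},\mathfrak X^{\otimes(m+1)})$.

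Measurability of $\bO\mapsto\bN^{(m+1)}(\bO)(A)$ in $\F^{\bN}$ follows from the inductive hypothesis together with Fubini-type measurability for integrals against the transition kernel $\bN$, and countable $\sigma$-additivity in $A$ is obtained by monotone convergence applied under the integral. Combining these points yields a well-defined counting random measure $\bN^{(m+1)}$, closing the induction.

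The main obstacle is the combinatorial bookkeeping at the inductive step: one must verify that the subtraction term kills exactly the diagonal contributions and no more. This is transparent when $\bO$ has no repeated atoms, in which case $i\mapsto\bx_i$ is injective and the sum over $I^m_{\neq}$ coincides with the sum over distinct-valued tuples. Since $\br$ is atomless, $\bN$ is $\P$-a.s.\ simple, so this injectivity holds almost surely; to obtain a pathwise statement on all of $\Omega$ one works with the formal index set $I$ rather than with the atoms themselves, which sidesteps any issue with multiplicities in non-simple configurations.
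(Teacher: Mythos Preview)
The paper does not actually supply a proof of this proposition; it is stated with a reference to \cite[Proposition 1]{Last2016} and then used as a black box. So there is nothing in the paper to compare your argument against.

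That said, your proposal is a correct self-contained proof. The inductive identification
\[
\bN^{(m)}(\bO)(A)=\sum_{(i_1,\ldots,i_m)\in I^m_{\neq}}\ind{(\bx_{i_1},\ldots,\bx_{i_m})\in A}
\]
is exactly the standard explicit description of the factorial measure, and your inductive step is the right combinatorial observation: integrating the last coordinate against $\bN$ adds an unconstrained index $i_{m+1}$, and the correction term removes precisely the cases $i_{m+1}\in\{i_1,\ldots,i_m\}$. Your remark about working with the index set $I$ rather than the atoms to handle non-simple configurations is the clean way to make the statement hold for every $\bO\in\Omega$ rather than merely $\P$-a.s. The measurability and $\sigma$-additivity points are routine and you address them adequately.
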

\noindent With this definition at hand we introduce the notion of iterated integrals. In particular for $A \in \mathfrak X$, 
$$ \bN^{(n)}(A^{\otimes n}) = \bN(A) (\bN(A)-1) \times \cdots \times (\bN(A)-n+1).$$
Note that by definition $\bN^{(n)}(A) \ind{\bN(A) < n}=0$. We now turn to the definition of two types of iterated integrals.

\begin{definition}
\label{definition:interated}
Let $n \in \mathbb N^*$ and $f_n \in L^1(\bX^n)$.
\begin{itemize}
\item 
$ \boldsymbol{\mathcal{I}}_n(f_n)$ the $n$th iterated integral of $f_n$ against the Poisson measure $\bN$ defined as 
$$ \boldsymbol{\mathcal{I}}_n(f_n):= \int_{\bX^{n}}  f_n(\bx_1,\ldots,\bx_n) \; \bN^{(n)}(d\bx_1,\ldots,d\bx_n),$$
where each of the integrals above is well-defined pathwise for $\P$-a.e. $\bO \in \Omega$.
\item $\bold{I}_n(f_n)$ the $n$th iterated integral of $f_n$ against the compensated Poisson measure $\tilde{\bN}$ defined as 
$$ \bold{I}_n(f_n) := \sum_{J\subset \{1,\ldots,n\}} (-1)^{n-|J|} \int_{\bX^{n-|J|}} \int_{\bX^{|J|}} f_n(\bx_1,\ldots,\bx_n) \; \bN^{(|J|)}(d\bx_J) \br^{\otimes (n-|J|)}(d\bx_{J^c}),$$
where $J^c:=\{1,\ldots,n\}\setminus J$ and $d\bx_J:=(d\bx_j)_{j\in J}$ and where each of the integrals above is well-defined pathwise for $\P$-a.e. $\bO \in \Omega$. We adopt the convention for $J=\emptyset$ that $\int ... \bN^{(0)}:=1$. It is worth noting that in the Poisson framework the iterated integrals $\bold{I}_n$ can be constructed in $L^1$ as integrals with respect to the factorial measures in a Lebesgue-Stieltjes fashion; or as the elements of the chaotic expansion which gives a construction of  $L^2(\Omega)$ as an orthonormal sum of subsets named chaos. We refer \textit{e.g.} to \cite{Privault_2009} for a description of this construction but we recall  in particular the isometry $\E[|\bold{I}_n(f_n)|^2] = \|f_n\|_{L^2(\bX^n)}^2$ whenever $f_n$ belongs to $L^2(\bX^n)$.\\\\
\noindent
Let us also remark that by definition $\bold{I}_n(f_n) = \bold{I}_n(\tilde f_n)$ and $\bpI_n(f_n) = \bpI_n(\tilde f_n)$ where $\tilde f_n$ denotes the symmetrization of $f_n$.

\end{itemize}
\end{definition}

\noindent To conclude this section, we recall a particular case of Mecke's formula (see \textit{e.g.} \cite[Relation (11)]{Last2016}).
\begin{lemma}[A particular case of Mecke's formula]
\label{lemma:Mecke}
Let $F \in L^0(\Omega)$, $k\in \mathbb N$ and $h\in L^0(\bX^k)$ then 
$$ \E\left[F \, \int_{\bX^k} h d\bN^{(k)}\right] = \int_{\bX^k} h(\bx_1,\ldots,\bx_k) \E\left[F\circ \varepsilon_{\bx_1,\ldots,\bx_k}^{+,k}\right] \br^{\otimes k}(d\bx_1,\ldots,d\bx_k),$$
providing the right-hand side is well defined as an integral in $L^1(\bX^k)$. The so-called integration by parts formula is then obtained with the same assumptions 
\begin{equation}
\label{eq:IPP}
\E\left[F \, \bold{I}_k(h) \right] = \int_{\bX^k} h(\bx_1,\ldots,\bx_k) \E\left[\bD_{(\bx_1,\ldots,\bx_k)}^k F\right] \br^{\otimes k}(d\bx_1,\ldots,d\bx_k).
\end{equation}
\end{lemma}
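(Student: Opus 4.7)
The plan has two parts mirroring the two displayed identities.

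\textbf{First identity (multivariate Mecke).} This is the classical multivariate Mecke/Slivnyak formula for factorial measures, and my strategy is to indicate how it reduces to the one-point Mecke formula via induction on $k$. The case $k=1$ is the standard Mecke formula
$$\E\left[F\,\int_{\bX} g(\bx)\,\bN(d\bx)\right] = \int_{\bX} g(\bx)\,\E\left[F\circ \varepsilon_{\bx}^{+,1}\right]\,\br(d\bx),$$
which one proves by a monotone-class argument starting from $F=\ind{\bN(A_1)=n_1,\ldots,\bN(A_\ell)=n_\ell}$ (where the independence and Poisson-distribution properties reduce it to a direct computation) or equivalently from the Laplace-functional characterization of $\bN$. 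For the inductive step $k\mapsto k+1$, I would use the recursive definition of $\bN^{(m+1)}$ in Proposition \ref{prop:factormeas}: conditioning on the first $k$ atoms via the induction hypothesis and then applying the one-point Mecke formula to the extra integration in $\bx_{k+1}$ produces the desired right-hand side, the diagonal correction $\sum_{j=1}^k \ind{\bx_{k+1}=\bx_j}$ vanishing because $\br$ is non-atomic so the add-point operator leaves such diagonals negligible. Integrability of the right-hand side in $L^1(\bX^k)$ is the standing assumption of the lemma, which lets Fubini legitimize each exchange.

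\textbf{Second identity (integration by parts).} The strategy is to expand the compensated iterated integral using the definition of $\bold I_k$ (Definition \ref{definition:interated}), apply the first identity term by term, and then re-collect the signed sum as the $k$th Malliavin derivative. Concretely, writing
$$\bold{I}_k(h) = \sum_{J\subset \{1,\ldots,k\}} (-1)^{k-|J|} \int_{\bX^{k-|J|}} \int_{\bX^{|J|}} h(\bx_1,\ldots,\bx_k)\, \bN^{(|J|)}(d\bx_J)\, \br^{\otimes(k-|J|)}(d\bx_{J^c}),$$
I multiply by $F$ and take expectation. For each subset $J$, the inner factorial integral is handled by the first identity (applied with $k$ replaced by $|J|$ and the $\bx_{J^c}$-variables frozen), giving
$$\E\left[F\int h\, d\bN^{(|J|)} \br^{\otimes(k-|J|)}\right] = \int_{\bX^k} h(\bx_1,\ldots,\bx_k)\,\E\left[F\circ \varepsilon_{\{\bx_j,\,j\in J\}}^{+,|J|}\right]\,\br^{\otimes k}(d\bx_1,\ldots,d\bx_k).$$
Summing over $J$ with signs $(-1)^{k-|J|}$ and using linearity of expectation, the integrand becomes
$$\sum_{J\subset\{1,\ldots,k\}} (-1)^{k-|J|} F\circ \varepsilon_{\{\bx_j,\,j\in J\}}^{+,|J|},$$
which is exactly the definition of $\bD^k_{(\bx_1,\ldots,\bx_k)} F$ in \eqref{eq:Dn}. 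Exchanging the finite sum with the expectation then yields the IBP formula.

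\textbf{Main obstacle.} The delicate point is integrability, not algebra: one needs the $L^1(\bX^k)$ condition stated in the lemma in order to apply Fubini at each signed term and to combine the $2^k$ terms into the single integral $\int h \E[\bD^k F]\,\br^{\otimes k}$. A secondary subtlety is the handling of diagonals when iterating Mecke, which is resolved by the non-atomicity of $\br$ so that any $\br^{\otimes k}$-negligible diagonal set can be discarded without affecting the identities.
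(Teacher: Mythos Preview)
The paper does not prove this lemma at all: it is introduced with the phrase ``we recall a particular case of Mecke's formula (see \textit{e.g.} [Relation (11)]\cite{Last2016})'' and is simply cited from the literature without proof. Your proof plan is a correct and standard approach --- induction on $k$ via the recursive definition of $\bN^{(m)}$ for the first identity, then expanding $\bold{I}_k$ and recollecting the alternating sum as $\bD^k F$ for the second --- but there is no proof in the paper to compare it against.
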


\section{Standard/Pseudo Chaotic expansion and Clark-Ocone formula}\label{sec:main}
 In the literature,   results on representations of Poisson functionals  are mainly concentrated on the chaos expansion involving iterated compensated integrals operators $\bold{I}_n$. Similarly  the classical Clark-Ocone formula is stated with respect to the compensated Poisson measure $\tilde \bN$.  Yet \cite{Nouspseudochaotic}   investigates the so called pseudo-chaotic expansion which involves iterated non-compensated integrals operators $\bpI_n$. This recent contribution leads the way to the study of a Clark-Ocone representation formula with respect to the non-compensated Poisson measure $ \bN$.

\subsection{Chaotic expansion and the Clark-Ocone formula}
Let us first  recall the classical chaotic expansion and  the Clark-Ocone formula with respect to  the compensated Poisson measure $\tilde \bN$.
\begin{theorem}[See \textit{e.g.} Theorem 2 in \cite{Last2016}]
\label{th:chaoticgeneral}
Let $F$ in $L^2(\Omega)$. Then there exists a unique sequence $(f_n^F)_{n \geq 1}$ with $f_n^F\in L_s^2(\bX^n)$ such that
$$ F = \E[F] + \sum_{n=1}^{+\infty} \frac{1}{n!} \bold{I}_n(f_n^F),$$
where the convergence of the series holds in $L^2(\Omega)$. In addition coefficients $(f_n^F)_n$ are given as 
$$ f_n^F = T_n F, \quad n\geq 1 $$
where $T_n F$ is defined by (\ref{eq:Tn}) and where the equality is understood in $L^2(\bX^n)$. In addition  Theorem 1 in \cite{Last2016} provides the convergence of  the series 
$$ \sum_{n=1}^{+\infty} \int_{\bX^n} \left|T_n F(\bx_1,\ldots,\bx_n)\right|^2 \br^{\otimes n}(d\bx_1,\ldots,d\bx_n) .$$
\end{theorem}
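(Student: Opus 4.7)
The plan is to organize the proof around three steps: an orthogonality/isometry result for multiple integrals $\bold{I}_n$, an identification of coefficients via Mecke's integration by parts formula, and finally a density/completeness argument that establishes convergence to $F$.

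First I would extend the $L^2$-isometry \eqref{eq:Isometry} to multiple iterated integrals, proving that for any $f_n \in L^2_s(\bX^n)$ and $g_m \in L^2_s(\bX^m)$,
$$ \E[\bold{I}_n(f_n) \bold{I}_m(g_m)] = \delta_{n,m} \, n! \, \langle f_n, g_m\rangle_{L^2(\bX^n)}. $$
This is obtained by decomposing $\bold{I}_n$ according to Definition \ref{definition:interated} and applying the one-step isometry inductively, or equivalently by first establishing the identity on elementary tensors of indicators of disjoint subsets in $\mathfrak X$ and then passing to the limit. This orthogonality across chaos orders immediately yields uniqueness of the sequence $(f_n^F)_n$, since the constant term equals $\E[F]$ after taking expectation, and each $f_n^F$ is determined by its scalar products against symmetric test functions.

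Next I would identify the coefficients. Assume the expansion holds with some $f_n^F \in L^2_s(\bX^n)$. Testing against $\bold{I}_k(h)$ for arbitrary $h \in L^2_s(\bX^k)$, the isometry yields $\E[F \,\bold{I}_k(h)] = \langle f_k^F, h\rangle_{L^2(\bX^k)}$. On the other hand, Mecke's integration by parts formula \eqref{eq:IPP} gives
$$ \E[F \,\bold{I}_k(h)] = \int_{\bX^k} h(\bx_1,\ldots,\bx_k)\, \E[\bD^k_{(\bx_1,\ldots,\bx_k)} F]\, \br^{\otimes k}(d\bx_1,\ldots,d\bx_k) = \langle T_k F, h\rangle_{L^2(\bX^k)}. $$
Since $h$ is arbitrary in the dense subspace $L^2_s(\bX^k)$ of symmetric tests and $T_k F$ is symmetric by construction, this forces $f_k^F = T_k F$ in $L^2_s(\bX^k)$. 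Moreover, applying Bessel's inequality to the partial sums $F_n := \E[F] + \sum_{k=1}^n \frac{1}{k!}\bold{I}_k(T_k F)$ already gives $\sum_k \frac{1}{k!} \|T_k F\|^2_{L^2(\bX^k)} \leq \|F\|^2_{L^2(\Omega)}$, which is precisely the convergence of the series stated at the end of the theorem.

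The remaining and main obstacle is to show that $F_n$ actually converges to $F$ in $L^2(\Omega)$, i.e.\ that the orthogonal sum of chaos subspaces exhausts $L^2(\Omega,\mathcal F^\bN)$. My plan here is a two-step density argument. First, for $F$ depending only on $\bN$ restricted to a set $B \in \mathfrak X$ with $\br(B) < +\infty$, condition on $\bN(B) = m$: on this event $F$ is a symmetric measurable function of $m$ i.i.d.\ points distributed according to $\br(\cdot\cap B)/\br(B)$, and the chaos expansion can be obtained by direct algebraic inversion of the relation between $\bD^n$ and $T_n$ combined with a finite-sum version of the factorial measure identity from Proposition \ref{prop:factormeas}. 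Second, one shows that such localized cylindrical functionals are dense in $L^2(\Omega,\mathcal F^\bN)$ (by an increasing sequence $B_k \uparrow \bX$ with $\br(B_k) < +\infty$ using $\sigma$-finiteness of $\br$, combined with a conditional expectation argument); the orthogonal projection onto the closed span of all chaos therefore agrees with the identity, closing the proof. This density step is the essential analytic ingredient, whereas everything else reduces to the calculus of the add-points operators and Mecke's formula already recalled in the excerpt.
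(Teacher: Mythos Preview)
The paper does not prove this theorem. Theorem~\ref{th:chaoticgeneral} is stated as a known result with the attribution ``See \textit{e.g.} Theorem~2 in \cite{Last2016}'' and is simply recalled as background for the pseudo-chaotic and Clark--Ocone results that follow; no proof is given anywhere in the paper. There is therefore nothing to compare your proposal against.

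That said, your outline is essentially the standard route (isometry/orthogonality of the $\bold{I}_n$, identification of the kernels via the integration by parts formula~\eqref{eq:IPP}, and completeness of the chaos), which is the approach of Last and Penrose that the paper cites. One caution on your Step~2: you use~\eqref{eq:IPP} with $h\in L^2_s(\bX^k)$ to conclude $\langle T_kF,h\rangle_{L^2(\bX^k)}=\E[F\,\bold{I}_k(h)]$, but Lemma~\ref{lemma:Mecke} as stated only guarantees the formula when the right-hand side is a well-defined $L^1(\bX^k)$ integral, and you have not yet shown $T_kF\in L^2(\bX^k)$ at that point; the usual fix is to establish the Bessel-type bound $\sum_k \tfrac{1}{k!}\|T_kF\|^2\le \|F\|^2$ \emph{first} (this is the content of the cited ``Theorem~1 in \cite{Last2016}'') and only then identify coefficients. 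Your Step~3 density argument is the genuinely nontrivial part and your sketch of it is plausible but thin; in particular the ``direct algebraic inversion'' on $\{\bN(B)=m\}$ deserves more detail.
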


\noindent The other classical relation on the Poisson space is the Clark-Ocone formula, for which one can find in the literature different variants with different conditions, such as e.g.  Privault \cite{Privault_2009}, Zhang \cite{zhang2009clark}, Di Nunno and Vives \cite{nunno2017malliavin}. In our setting  it takes the following form.
\begin{theorem}[Clark-Ocone formula]
\label{th:CO}
Let $F$ in $L^2(\Omega)$. Then
$$ F = \E[F] + \int_{[0,T] \times \X} \left({ \bD_{(t,x)} F} \right)^p\tilde{\bN}(dt,dx)$$
where the process $\left({ \bD F} \right)^p$ belongs to $\mathbb L^2_{\mathcal P}$ and is in this context the predictable projection of  $\bD F$ which is properly defined in the Appendix.
\end{theorem}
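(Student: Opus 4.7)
The plan is to deduce the Clark--Ocone formula from the chaotic expansion of Theorem \ref{th:chaoticgeneral} by rewriting each chaos as a single stochastic integral whose integrand turns out to be the predictable projection of the Malliavin derivative. First I would expand $F-\E[F] = \sum_{n\geq 1} \frac{1}{n!} \bold{I}_n(T_n F)$ in $L^2(\Omega)$. The summability in Theorem \ref{th:chaoticgeneral} of $\sum_{n\geq 1} \frac{1}{n!} \|T_n F\|_{L^2(\bX^n)}^2$ together with the isometry \eqref{eq:Isometry} will justify every exchange of sum and integral performed later.

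The central computation is the recursive Itô-type identity, valid for any symmetric $f_n \in L^2_s(\bX^n)$,
\begin{equation*}
\bold{I}_n(f_n) \;=\; n \int_{[0,T]\times \X} \bold{I}_{n-1}\!\left( f_n\bigl(\cdot,(t,x)\bigr)\,\mathbf{1}_{\{s_1<t,\ldots,s_{n-1}<t\}}\right) \tilde{\bN}(dt,dx),
\end{equation*}
which follows from unfolding $\bold{I}_n$ as $n!$ times the integral over the time-ordered simplex and reading off the outermost integration. The inner integrand is $\mathcal F_{t-}^{\bN}$-measurable by construction, hence predictable, and the $L^2$-isometry applied iteratively matches the norms on both sides. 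Summing over $n$ will give
\begin{equation*}
F - \E[F] \;=\; \int_{[0,T]\times\X} U_{(t,x)}\,\tilde{\bN}(dt,dx), \quad U_{(t,x)} := \sum_{n\geq 1} \frac{1}{(n-1)!} \bold{I}_{n-1}\!\left(T_n F\bigl(\cdot,(t,x)\bigr)\mathbf{1}_{\{\cdot<t\}}\right),
\end{equation*}
with $U\in\mathbb L^2_{\mathcal P}$.

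It remains to identify $U_{(t,x)}$ with $(\bD_{(t,x)} F)^p$. Using the iteration property \eqref{prop:Dnaltenative}, $\bD^n F = \bD(\bD^{n-1}F)$, and the commutation of $\bD$ with the chaos $\bold{I}_k$ (one chaos is lowered and the kernel is evaluated at the extra variable), one obtains the chaos expansion of the Malliavin derivative itself:
\begin{equation*}
\bD_{(t,x)} F \;=\; \sum_{n\geq 1} \frac{1}{(n-1)!}\,\bold{I}_{n-1}\!\bigl( T_n F(\cdot,(t,x))\bigr) \quad \text{in } L^2(\Omega\times \bX).
\end{equation*}
Taking conditional expectations with respect to $\mathcal F_{t-}^\bN$ (the essence of the predictable projection) annihilates every argument whose time coordinate is $\geq t$, leaving exactly the expression $U_{(t,x)}$ above. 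This identifies $U = (\bD F)^p$ and closes the proof.

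The main obstacle is the rigorous handling of the predictable projection: one must check that the chaos representation of $\bD F$ is compatible with conditioning on $\mathcal F_{t-}^\bN$, equivalently that $\bold{I}_{k}\bigl(g(\cdot)\bigr)$ conditioned on $\mathcal F_{t-}^\bN$ equals $\bold{I}_k\bigl(g(\cdot)\mathbf{1}_{\{\cdot<t\}}\bigr)$. This is a standard but delicate use of the isometry \eqref{eq:Isometry} and the martingale property of $s\mapsto \int_{[0,s]\times\X} h\,d\tilde{\bN}$ from Jacod's theorem, and once granted, all remaining steps are algebraic manipulations of the chaos coefficients.
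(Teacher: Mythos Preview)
Your overall architecture---chaotic expansion, rewriting each $\bold{I}_n$ as a single stochastic integral over the outermost variable, summing, and identifying the integrand with $(\bD F)^p$---is exactly the route the paper takes. The computations you sketch for a single chaos $\bold{I}_n(f_n)$ are correct and coincide with the paper's Step~4 in Lemma~\ref{lemma:span} and the opening of the Appendix proof.

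There is, however, a genuine gap in your identification step. You write that
\[
\bD_{(t,x)} F \;=\; \sum_{n\geq 1} \frac{1}{(n-1)!}\,\bold{I}_{n-1}\!\bigl( T_n F(\cdot,(t,x))\bigr) \quad \text{in } L^2(\Omega\times \bX),
\]
and then condition on $\mathcal F^{\bN}_{t-}$. But for general $F\in L^2(\Omega)$ this series need \emph{not} converge in $L^2(\Omega\times\bX)$: convergence would require $\sum_{n\geq 1}\frac{n}{n!}\|T_nF\|_{L^2(\bX^n)}^2<\infty$, which is strictly stronger than the $\sum_{n\geq 1}\frac{1}{n!}\|T_nF\|^2<\infty$ that Theorem~\ref{th:chaoticgeneral} gives you. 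This is precisely the issue flagged in Remark~\ref{COlimitL2}: $F\in L^2(\Omega)$ does not force $\bD F\in L^2(\Omega\times\bX)$, so you cannot first expand $\bD F$ in $L^2$ and then project.

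The paper avoids this by reversing the order of operations. It first proves the formula for $F$ in the span $V$ of finitely many chaoses (where $\bD F$ \emph{is} in $L^2$ and your argument goes through verbatim), observes that on $V$ the map $F\mapsto (\E_{t-}[\bD_{(t,x)}F])_{(t,x)}$ is an $L^2$-isometry, and then \emph{defines} $(\bD F)^p$ for general $F\in L^2(\Omega)$ as the $\mathbb L^2_{\mathcal P}$-limit along any approximating sequence from $V$. Your process $U$ is exactly this limit; what is missing from your argument is the recognition that $U$ must serve as the \emph{definition} of $(\bD F)^p$ rather than being identified with a pre-existing predictable projection of an object that may fail to live in $L^2$.
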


\begin{remark}\label{COlimitL2}
Contrary to what it suggests, the meaning of the integrand in the stochastic integral has to be made precise. According to Definition \ref{definition:Dn}, $\bD F$ belongs to $L^0(\Omega\times \bX)$ whenever $F$ belongs to $L^0(\Omega)$;  however, the definition of the operator $\left({ \bD F} \right)^p$ is not guaranteed. For instance $F$ in $L^2(\Omega\times \bX)$ does not entail $\bD F \in L^2(\Omega\times \bX)$. Several sufficient conditions can be found in the literature (see Last \cite{Last2016}). In general as we will make it precise in the proof of Theorem \ref{th:CO} (see Section \ref{Appendix}), it is possible to define $\left({ \bD F} \right)^p$ in $L^2(\Omega\times \bX)$ in a limiting procedure and based on the continuous feature of the mapping $F \mapsto \left(\E_{t-}\left[\bD_{(t,x)} F\right]\right)_{(t,x)}$ together with the fact that for fixed $(t,x)$, $\E_{t-}\left[\bD_{(t,x)} F\right] = \left({ \bD_{(t,x)} F} \right)^p \; \mathbb P-a.s.$ (see \cite{zhang2009clark}). For sake of completeness, we reproduce the proof of this result in the Appendix (Section \ref{Appendix}) following \cite[Section 3.2]{Privault_2009}. {Note that  even in the case of the Clark-Ocone formula obtained in \cite{nunno2017malliavin} where the $L^2$ assumption on $F$ is relaxed to $L^1$, the integrand operator is assumed to belong to $L^1$ as well. In Theorem \ref{th:pseudoCOfinite}  below (Pseudo-Clark-Ocone formula) the integrability of the integrand is a consequence of $F$ in $L^1$.}
\end{remark}

\subsection{Pseudo-chaotic expansion and Pseudo-Clark-Ocone formula}

We recall  here the pseudo-chaotic expansion  obtained with respect to the non-compensated iterated integrals operators $\bpI_n$.
 \begin{theorem}[See Theorem 2 in \cite{Nouspseudochaotic}]\label{th:equipseudochao}
Assume  $\br(\bX)<+\infty$  and $F$ in $L^{2}(\Omega)$. Then 
\begin{equation}
\label{eq:pseudochaotic}
F=  F(\bO_\emptyset) + \sum_{k=1}^{+\infty} \frac{1}{k!} \bpI_k(\T_k F); \quad \P-a.s.
\end{equation}
\end{theorem}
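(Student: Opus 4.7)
The plan is to prove the identity pathwise on the full-measure event $\{\bN(\bX)<+\infty\}$, which is a consequence of $\br(\bX)<+\infty$ and the fact that $\bN(\bX)$ is then a Poisson random variable of finite mean. On this event the claimed series will have only finitely many non-zero terms, so convergence will not be an issue and the $L^2(\Omega)$ assumption will only enter to guarantee measurability/well-posedness of $\T_k F$ and $\bpI_k(\T_k F)$ (which actually already hold in $L^0(\Omega)$).

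Fix $\bO\in\Omega$ with $\bN(\bX)(\bO)=n<+\infty$ and write $\bO=\sum_{i=1}^n\delta_{\bx_i}$ with $\bx_1,\ldots,\bx_n$ pairwise distinct (which holds $\P$-a.s. since $\br$ is atomless). From Proposition \ref{prop:factormeas} the factorial measure expands as $\bN^{(k)}=\sum_{(i_1,\ldots,i_k)\,\mathrm{distinct}}\delta_{(\bx_{i_1},\ldots,\bx_{i_k})}$, so in particular $\bpI_k(\T_k F)(\bO)=0$ for all $k>n$ and the series in \eqref{eq:pseudochaotic} collapses pathwise to $\sum_{k=0}^n \tfrac{1}{k!}\bpI_k(\T_k F)(\bO)$ (with the convention $\bpI_0:=1$ and $\T_0 F=F(\bO_\emptyset)$). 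Using that $\T_k F$ is symmetric in its arguments (immediate from Definition \ref{definition:patwisederivative}), the sum over ordered distinct $k$-tuples can be reorganized as
\begin{equation*}
\frac{1}{k!}\,\bpI_k(\T_k F)(\bO) \;=\; \sum_{\substack{S\subset\{1,\ldots,n\}\\ |S|=k}}\, \T_k F\!\left((\bx_i)_{i\in S}\right).
\end{equation*}

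Now I would plug in the explicit formula for $\T_k F$ and swap the two summations:
\begin{equation*}
\sum_{k=0}^n \frac{1}{k!}\,\bpI_k(\T_k F)(\bO) \;=\; \sum_{J\subset\{1,\ldots,n\}} \left[\sum_{\substack{S:\; J\subset S\subset\{1,\ldots,n\}}} (-1)^{|S|-|J|}\right] F\!\left(\sum_{i\in J}\delta_{\bx_i}\right).
\end{equation*}
Setting $T=S\setminus J$, the bracket becomes $\sum_{T\subset\{1,\ldots,n\}\setminus J}(-1)^{|T|}$, which equals $\ind{J=\{1,\ldots,n\}}$. Only the term $J=\{1,\ldots,n\}$ survives and its contribution is $F(\bO)$, yielding the pathwise identity $F(\bO)=F(\bO_\emptyset)+\sum_{k=1}^{+\infty}\tfrac{1}{k!}\bpI_k(\T_k F)(\bO)$ on $\{\bN(\bX)<+\infty\}$, hence $\P$-almost surely.

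The only genuine work is therefore the inversion identity $\sum_{J\subset S}(-1)^{|S|-|J|}\,[\,\cdot\,]=\ind{J=\cdots}$, which is a standard Möbius inversion on the Boolean lattice; the remaining points (pathwise finiteness of $\bN(\bX)$, vanishing of $\bN^{(k)}$ for $k>n$, and symmetry of $\T_k F$) are direct consequences of the setup. What I would expect to be slightly delicate, and worth making explicit, is the book-keeping that justifies reducing the a.s.\ series to a finite pathwise sum before performing the combinatorial manipulation, rather than trying to exchange limit and series at the $L^2$ level as one would do in the classical chaotic expansion of Theorem \ref{th:chaoticgeneral}.
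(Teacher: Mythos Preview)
Your argument is correct. The paper itself does not give a proof of Theorem~\ref{th:equipseudochao}; it merely quotes the result from \cite{Nouspseudochaotic}, so there is nothing in the present paper to compare against directly.

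Your pathwise combinatorial route is sound: once $\bN(\bX)=n<+\infty$ almost surely (which follows from $\br(\bX)<+\infty$), the factorial-measure description of $\bN^{(k)}$ and the symmetry of $\T_k F$ reduce the right-hand side to a finite double sum over $J\subset S\subset\{1,\ldots,n\}$, and the inner alternating sum $\sum_{T\subset\{1,\ldots,n\}\setminus J}(-1)^{|T|}=\ind{J=\{1,\ldots,n\}}$ collapses everything to $F(\bO)$. Your remark that the $L^2$ hypothesis plays no role in this argument is also correct: the identity is purely pathwise and holds for $F\in L^0(\Omega)$; the $L^2$ assumption in the statement is inherited from the formulation in \cite{Nouspseudochaotic}, where it is used for a different (chaotic-expansion based) derivation and for uniqueness considerations.
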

\noindent Inspired by this pseudo-chaotic expansion, we introduce below the operator that will allow us to write a Clark-Ocone formula with respect to the non-compensated Poisson measure (that we called Pseudo-Clark-Ocone formula). 
\begin{definition}
\label{def:tau_t}
For fixed $t>0$, we consider the measurable transformation  $\tau_t$ 
$$
\tau_t: \begin{array}{ll}
\Omega &\to \Omega\\
\bO & \mapsto \bO_t 
\end{array} 
$$
where for $\bO$ of the form $\bO:=\sum_{j=1}^{n} \delta_{{(t_j,x_j)}}, \; (t_j,x_j) \in \bX, \;j=1,\ldots,n, \; n\in \mathbb{N}\cup\{+\infty\}$, we set 
$$ \bO_t:= \sum_{j=1}^{n} \delta_{{(t_j,x_j)}} \ind{t_j < t}.$$
We also set $\tau_0$ as 
$$ \tau_0(\bO):=\bO_\emptyset; \quad \bO\in \Omega$$
where we recall Notation \ref{notation:emptyset}.
\end{definition}
Roughly speaking, $\tau_t$ plays the role of a  $\mathcal F^{\bN}_{t^-}$-conditional expectation.

\begin{definition}
\label{definition:patwisederivativeconditionned}
Fix $(t,x)$ in $\bX$. For $F\in L^0(\Omega)$, we define the operator $\mathcal{H}$ as : 
$$ \mathcal{H}: \begin{array}{ll} L^0(\Omega) &\to L^0(\Omega\times \bX) \\ F &\mapsto \mathcal{H}_{(t,x)} F  := \bD_{(t,x)}F \circ \tau_t  \end{array}$$
that is  for $\bO \in \Omega$
$$ \mathcal{H}_{(t,x)} F (\bO) := \bD_{(t,x)}F(\bO_t) = F(\bO_t \circ \varepsilon^+_{(t,x)}) - F(\bO_t).$$
\end{definition}
{\noindent As highlighted in the following lemma,  it turns out that  the operator  $\mathcal{H}$ can be written as the combination of the Malliavin derivative with a very specific Girsanov transformation $L^{T,t}$. This is exactly the (non-equivalent) Girsanov transformation which is at the core of the pseudo-chaotic expansion (see \cite{Nouspseudochaotic}) and  under which the Poisson measure $\bN$  (with intensity $\br$ on $\bX$) becomes a Poisson measure with intensity 0 on $[t,T]\times \X$.
\begin{lemma}
\label{lemma:conditionning}
Assume $\br(\bX)<+\infty$ and $F \in L^0(\Omega)$. Fix $(t,x)\in (0,T)\times \X$. Then the operator $\mathcal{H}_{(t,x)}$ re-writes as 
$$ \mathcal{H}_{(t,x)} F = \E_{t-}[L^{T,t} \bD_{(t,x)} F], \; \P-a.s.$$
 $$ \mbox{ with } \quad L^{T,t}:= \exp((T-t)\pi(\X)) \ind{\bN([t,T]\times \X)=0}.$$
\end{lemma}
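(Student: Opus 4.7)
The plan is to unpack both sides of the identity and exploit two elementary but essential facts about the Poisson measure $\bN$: the independence of $\bN$ restricted to $[0,t)\times\X$ and $\bN$ restricted to $[t,T]\times\X$ (splitting property), together with the fact that $\P(\bN([t,T]\times\X)=0)=e^{-(T-t)\pi(\X)}$.

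First I would note that the map $\bO \mapsto \mathcal H_{(t,x)} F(\bO)=\bD_{(t,x)} F(\bO_t)$ depends on $\bO$ only through $\bO_t$, hence is $\mathcal F^{\bN}_{t-}$-measurable. Next, the key pathwise observation: on the event $E_t:=\{\bN([t,T]\times\X)=0\}$ one has $\bO=\bO_t$, so
\begin{equation*}
\ind{E_t}(\bO)\,\bD_{(t,x)} F(\bO) = \ind{E_t}(\bO)\,\bigl(F(\bO_t+\delta_{(t,x)})-F(\bO_t)\bigr)=\ind{E_t}(\bO)\,\mathcal H_{(t,x)} F(\bO).
\end{equation*}
Here I use the assumption that $\br$ is atomless so $(t,x)\notin\text{supp}(\bO)$ almost surely, hence the add-point operator $\varepsilon^+_{(t,x)}$ writes as $\cdot+\delta_{(t,x)}$.

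Then I would compute the conditional expectation. Multiplying by the deterministic factor $e^{(T-t)\pi(\X)}$ and taking $\E_{t-}$, one gets
\begin{equation*}
\E_{t-}\bigl[L^{T,t}\bD_{(t,x)} F\bigr]=e^{(T-t)\pi(\X)}\,\E_{t-}\bigl[\ind{E_t}\,\mathcal H_{(t,x)} F\bigr].
\end{equation*}
Since $\mathcal H_{(t,x)} F$ is $\mathcal F^{\bN}_{t-}$-measurable, it factors out of the conditional expectation; and since $\ind{E_t}$ depends only on $\bN|_{[t,T]\times\X}$ which is independent of $\mathcal F^{\bN}_{t-}$, the remaining conditional expectation equals $\P(E_t)=e^{-(T-t)\pi(\X)}$. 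The two exponentials cancel, yielding
\begin{equation*}
\E_{t-}\bigl[L^{T,t}\bD_{(t,x)} F\bigr]=\mathcal H_{(t,x)} F,\qquad\P\text{-a.s.}
\end{equation*}

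The only subtle point is checking that the conditional expectation is well defined in the absence of integrability hypotheses on $F$, but the factor $\ind{E_t}$ forces the integrand $L^{T,t}\bD_{(t,x)} F$ to coincide (on $E_t$) with the $\mathcal F^{\bN}_{t-}$-measurable quantity $e^{(T-t)\pi(\X)}\,\mathcal H_{(t,x)} F\,\ind{E_t}$, so the identity can be read either as a pathwise equality on $E_t$ combined with the factoring argument above, or, under $\br(\bX)<+\infty$, as an honest conditional expectation since $L^{T,t}$ is bounded on $E_t$. No delicate analytical argument is involved — the proof is really just the splitting property of $\bN$ dressed up through the Girsanov density $L^{T,t}$.
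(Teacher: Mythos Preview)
Your proof is correct and follows essentially the same route as the paper: both use the pathwise identity $\ind{\bN([t,T]\times\X)=0}\,\bD_{(t,x)}F=\ind{\bN([t,T]\times\X)=0}\,\bD_{(t,x)}F\circ\tau_t$, then factor out the $\mathcal F^{\bN}_{t-}$-measurable part and compute $\E_{t-}[\ind{\bN([t,T]\times\X)=0}]=e^{-(T-t)\pi(\X)}$ via independence. Your version is slightly more explicit about the independence step and the well-definedness of the conditional expectation, but the argument is the same.
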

}
\medskip
\begin{proof}
Let $\bO \in \Omega$. 
\begin{align*}
(L^{T,t} \bD_{(t,x)} F)(\bO)
&=\exp((T-t)\pi(\X)) \ind{\bO([t,T]\times \X)=0} \bD_{(t,x)} F(\bO) \\
&=\left(\exp((T-t)\pi(\X)) \ind{\bN([t,T]\times \X)=0} \bD_{(t,x)} F\circ \tau_t \right)(\bO).
\end{align*}
Hence since $\bD_{(t,x)} F\circ \tau_t$ is $\mathcal F^{\bN}_{t-}$-mesurable 
\begin{align*}
\E_{t-}[L^{T,t} \bD_{(t,x)} F] &=\bD_{(t,x)} F\circ \tau_t \; \exp((T-t)\pi(\X)) \E_{t-}[\ind{\bN([t,T]\times \X)=0}]\\
& =(F(\bO_t \circ \varepsilon^+_{(t,x)}) - F(\bO_t)) \; \exp((T-t)\pi(\X)) \E_{t-}[\ind{\bN([t,T]\times \X)=0}] \\
&= \mathcal H_{(t,x)} F.
\end{align*}
\end{proof}

\noindent We have now all the elements to  state the Pseudo-Clark-Ocone formula  for any $F$ in $L^1(\Omega)$, first under the condition that $\br(\bX)<+\infty$.
\begin{theorem}
\label{th:pseudoCOfinite}
Assume $\br(\bX)<+\infty$. Let $F$ in $L^1(\Omega)$. Then $\mathcal H F\in \mathbb L^1_{\mathcal P}$ and
$$ F = F(\bO_\emptyset) + \int_{[0,T] \times \X} \mathcal{H}_{(t,x)} F \; \bN(dt,dx), \; \P-a.s..$$
In addition this decomposition is unique in the sense that if there exists $(c,Z) \in \mathbb R \times \mathbb L^1_{\mathcal P}$ such that 
\begin{equation}\label{proofUnicite}
F = c + \int_{[0,T] \times \X} Z_{(t,x)} \; \bN(dt,dx), \; \P-a.s..\end{equation}
then $c=F(\bO_\emptyset)$ and for a.-e. $(t,x) \in [0,T] \times \X$, $Z_{(t,x)} = \mathcal{H}_{(t,x)}F$, $\P$-a.s..
\end{theorem}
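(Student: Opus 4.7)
The strategy is to first establish the representation pathwise by exploiting that $\br(\bX)<+\infty$ forces $\bN$ to have only finitely many atoms, then to quantify the $\mathbb L^1_{\mathcal P}$-norm of $\mathcal{H}F$ via Lemma~\ref{lemma:conditionning} combined with a reverse Mecke argument, and finally to derive uniqueness through a pathwise induction on the number of atoms. For the pathwise identity, almost every configuration has finitely many atoms which may be ordered by time as $(t_1,x_1),\ldots,(t_n,x_n)$ with $t_1<\cdots<t_n$. Setting $\bO^{(j)}:=\sum_{i=1}^{j}\delta_{(t_i,x_i)}$ and $\bO^{(0)}:=\bO_\emptyset$, Definition~\ref{def:tau_t} gives $\bO_{t_j}=\bO^{(j-1)}$ and $\bO_{t_j}+\delta_{(t_j,x_j)}=\bO^{(j)}$, so that the integral against $\bN$ collapses to a telescoping sum
$$\int_{[0,T]\times\X}\mathcal{H}_{(t,x)}F(\bO)\,\bN(dt,dx)=\sum_{j=1}^{n}\bigl(F(\bO^{(j)})-F(\bO^{(j-1)})\bigr)=F(\bO)-F(\bO_\emptyset),$$
which yields the representation $\P$-a.s.\ directly, with no $L^2$-limit required. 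Predictability of $\mathcal{H}F$ is immediate from Definition~\ref{definition:patwisederivativeconditionned} since $\mathcal{H}_{(t,x)}F$ depends on $\bO$ only through $\bO_t$.

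For the $\mathbb L^1_{\mathcal P}$-integrability I would start from the pointwise bound $|\mathcal{H}_{(t,x)}F|\leq\E_{t-}[L^{T,t}|F(\cdot+\delta_{(t,x)})|]+\E_{t-}[L^{T,t}|F|]$, obtained from Lemma~\ref{lemma:conditionning} and the triangle inequality. The $|F|$-term provides the harmless contribution $T\pi(\X)e^{T\pi(\X)}\|F\|_{L^1}$ via the uniform bound $L^{T,t}\leq e^{T\pi(\X)}$. The $|F(\cdot+\delta_{(t,x)})|$-term is the main obstacle, because the naive estimate $\int\E[|F(\bO+\delta_{(t,x)})|]\br(dt,dx)=\E[|F|\bN(\bX)]$ (obtained from Mecke) need not be finite for $F$ only in $L^1$. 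The cure is to retain the indicator $\ind{\bN([t,T]\times\X)=0}$ hidden inside $L^{T,t}$: on that event $\bO+\delta_{(t,x)}$ has $(t,x)$ as its unique last atom, so that Mecke's formula (Lemma~\ref{lemma:Mecke} with $k=1$) rewrites the corresponding $\br$-integral as
$$\E\Bigl[|F(\bO)|\sum_{(s,y)\in\bO}\ind{(s,y)\textrm{ is the last atom of }\bO}\,e^{(T-s)\pi(\X)}\Bigr]\leq e^{T\pi(\X)}\|F\|_{L^1}.$$
Summing the two contributions gives $\|\mathcal{H}F\|_{\mathbb L^1_{\mathcal P}}\leq(1+T\pi(\X))e^{T\pi(\X)}\|F\|_{L^1}$.

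For uniqueness, evaluating \eqref{proofUnicite} at $\bO_\emptyset$ makes the $\bN$-integral vanish and yields $c=F(\bO_\emptyset)$. Setting $\xi_{(t,x)}:=Z_{(t,x)}-\mathcal{H}_{(t,x)}F\in\mathbb L^1_{\mathcal P}$, one has $\int_{[0,T]\times\X}\xi\,\bN(dt,dx)=0$ $\P$-a.s., while predictability gives $\xi_{(t,x)}(\bO)=\xi_{(t,x)}(\bO_t)$. Conditioning on $\{\bN(\bX)=n\}$, which has positive probability and conditionally produces $n$ atoms i.i.d.\ under $\br/\br(\bX)$, the identity reads $\sum_{j=1}^{n}\xi_{(t_j,x_j)}(\bO^{(j-1)})=0$ for the ordered atoms, with the joint law of this ordered tuple equivalent to $\br^{\otimes n}$ on the time-simplex. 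I would then induct on $n$: the case $n=1$ gives $\xi_{(t,x)}(\bO_\emptyset)=0$ for $\br$-a.e.\ $(t,x)$, and at step $n+1$ the vanishing of the first $n$ terms, inherited from previous levels, forces $\xi_{(t_{n+1},x_{n+1})}(\bO^{(n)})=0$ for a.e.\ tuple. This propagates to $\xi_{(t,x)}(\bO')=0$ for $\P\otimes\br$-a.e.\ $(\bO',(t,x))$, which is precisely the announced uniqueness.
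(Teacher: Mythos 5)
Your proof is correct, and it takes a genuinely different and more elementary route than the paper's. The paper never writes the telescoping identity: it first establishes the formula for partial sums of the chaotic expansion of an $L^2$ functional (via Lemma~\ref{lemma:span}, which converts the compensated iterated integrals $\bold{I}_n$ into uncompensated ones $\bpI_n$ and obtains predictability of the integrand by approximating the kernels with continuous functions), then passes to the $L^2$ limit and finally truncates a general $L^1$ functional, each limit being controlled by Lemma~\ref{lemma:existenceTtilde}. Your observation that $\int_{[0,T]\times\X}\mathcal{H}_{(t,x)}F\,\bN(dt,dx)$ collapses pathwise to $\sum_{j}\bigl(F(\bO^{(j)})-F(\bO^{(j-1)})\bigr)$ makes the representation an exact identity for every finite configuration, so that all the analytic work is concentrated where it belongs, namely in proving $\mathcal{H}F\in\mathbb L^1_{\mathcal P}$; and your integrability bound is essentially the computation the paper performs inside Lemma~\ref{lemma:existenceTtilde} (the indicator hidden in $L^{T,t}$ selecting the last atom, so that the Mecke sum contributes at most one term), applied directly to $F$ rather than to the differences $F-G_k$. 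Your uniqueness argument (induction on the number of atoms) also differs from the paper's, which instead applies $\bD_{(t,x)}(\cdot)\circ\tau_t$ to both sides of \eqref{proofUnicite} and kills the integral term by predictability of $Z$; both work, the paper's being shorter. What your route buys is a transparent, fully pathwise existence proof that bypasses the chaotic expansion entirely; what the paper's route buys is that the same lemmata are reused for the $\br(\bX)=+\infty$ case and for the comparison with the classical Clark--Ocone formula.

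Two points in your write-up need more care than ``immediate''. First, predictability of $\mathcal{H}F$: knowing that $\mathcal{H}_{(t,x)}F$ depends on $\bO$ only through $\bO_t$ gives $\mathcal F^{\bN}_{t-}$-measurability for each fixed $(t,x)$, which does not by itself yield joint $\mathcal P$-measurability. You should add that $(t,\bO)\mapsto\bO_t$ is left-continuous and adapted as an $\Omega$-valued process, hence $\mathcal P^{\bN}$-measurable, and that $\mathcal{H}F$ is its composition with the jointly measurable map $((t,x),\varpi)\mapsto F(\varpi+\delta_{(t,x)})-F(\varpi)$; the paper obtains the same conclusion instead through the left-continuity of approximating kernels in Step 4 of Lemma~\ref{lemma:span}. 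Second, both your telescoping identity and your induction evaluate the $L^1$-equivalence class $F$ (resp.\ $\xi$) at the sub-configurations $\bO^{(j)}$; this is legitimate because the law of $\bO^{(j)}$ is absolutely continuous with respect to $\P$, and because the order-statistics marginals at consecutive levels $n$ and $n+1$ are mutually absolutely continuous (which is what your induction actually needs to transfer null sets between levels), but a sentence to that effect should be included --- the paper's own uniqueness proof carries the same implicit step.
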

\begin{proof} To highlight the main keys  of the proof, some  technical lemmata (namely  Lemma \ref{lemma:existenceTtilde} and Lemma  \ref{lemma:span})  are postponed  in Section \ref{sseclemmata}.\\
\textbf{Uniqueness:} Assume Relation \eqref{proofUnicite} holds.  Then 
note that  
$ F(\bO_\emptyset) = F \ind{\bN(\bX)=0} = c.$\\
Fix $(t,x) \in \bX$, $t>0$. We have for $\P$-a.e. $\bO$ in $\Omega$, since $Z \in \mathbb L^1_{\mathcal P}$
\begin{align*}
\mathcal{H}_{(t,x)} F (\bO) 
&= \bD_{(t,x)}F(\bO_t) \\
&= \int_{[0,T]  \times \X} (Z_{(s,y)}(\bO_t+\delta_{(t,x)})-Z_{(s,y)}(\bO_t)) \; \bO_t(ds,dy) + Z_{(t,x)}(\bO_t+\delta_{(t,x)}) \\
&= Z_{(t,x)}(\bO_t+\delta_{(t,x)})
=Z_{(t,x)}(\bO_t)
=Z_{(t,x)}(\bO).
\end{align*}
\textbf{Existence:} We proceed in several parts. \\
 \underline{Step 1.} Since $\br(\bX)<+\infty$, $\P[\bN(\bX)=0]>0$. Let $F \in L^2(\Omega)$. Thus $ \lim_{p\to +\infty} \E[|F-F^k|^2] = 0$ with $F^k := \E[F] + \sum_{n=1}^k \bold{I}_n(T_n F)$. By Lemma \ref{lemma:span}  and the uniqueness of the pseudo-chaotic expansion, for any $k$,
$$ F^k = F^k(\bO_\emptyset) + \sum_{n=1}^k \bpI_n(\T_n F^k), 
\mbox{ and  }
 F^k = F^k(\bO_\emptyset)  + \int_{[0,T]\times \X} \mathcal H_{(t,x)} F^k \; \bN(dt,dx), $$
and $\mathcal H_{(t,x)} F^k$ belongs to $\mathbb L^1_{\mathcal P}$.
In addition by Lemma \ref{lemma:existenceTtilde}, $\mathcal H F$ is well-defined in $\mathbb L^1_{\mathcal P}$ and 
$$ \lim_{k\to+\infty} \E\left[\left|\int_{[0,T]\times \X}  \mathcal{H}_{(t,x)}F^k \bN(dt,dx) - \int_{[0,T]\times \X}  \mathcal{H}_{(t,x)} F \bN(dt,dx)\right|\right]=0.$$
Hence
\begin{align*}
&\E\left[\left|F - F(\bO_\emptyset) - \int_{[0,T]\times \X}  \mathcal{H}_{(t,x)} F \bN(dt,dx)\right|\right]\\
&\leq \E\left[\left|F^k - F^k(\bO_\emptyset) - \int_{[0,T]\times \X}  \mathcal{H}_{(t,x)} F^k \bN(dt,dx)\right|\right]\\ 
&+\E\left[\left|\int_{[0,T]\times \X}  (\mathcal{H}_{(t,x)} F^k-\mathcal{H}_{(t,x)} F) \bN(dt,dx)\right|\right]\\
&+\E\left[\left|F - F^k \right|\right]+\left|F(\bO_\emptyset) - F^k(\bO_\emptyset)\right| 
\quad \to_{k\to+\infty} 0
\end{align*}
 $$ \mbox{ as } \quad F(\bO_\emptyset) - F^p(\bO_\emptyset) = (\P[\bN(\bX)=0])^{-1} \E[|F - F^k| \ind{\bN(\bX)=0}] \to_{p\to+\infty} 0.$$
\underline{Step 2.} Assume $F \in L^1(\Omega)$. Let for $k\geq 1$, $F^k:=-k \vee F \wedge k $. Then by monotone convergence theorem $\lim_{k\to+\infty}\E[|F-F^k|] =0$. For any $k$, as $F^k \in L^2(\Omega)$, 
by Step 1, 
$$ F^k = F^k(\bO_\emptyset) + \int_{\bX} \mathcal H_{(t,x)} F^k \bN(dt,dx).$$
By definition, 
\begin{align*}
|F^k(\bO_\emptyset) - F(\bO_\emptyset)| &= ((\P[\bN(\bX)=0])^{-1}) \E[|F^k(\bO_\emptyset) -F(\bO_\emptyset)|] \\
&= ((\P[\bN(\bX)=0])^{-1}) \E[|F(\bO_\emptyset)| \ind{|F(\bO_\emptyset)| > k}] \\
&\to_{k\to+\infty} 0.
\end{align*}
Finally by Lemma \ref{lemma:existenceTtilde}, $\mathcal H F \in \mathbb L^1_{\mathcal P}$ and
$$ \lim_{k\to+\infty} \E\left[\left|\int_{[0,T]\times \X}  \mathcal{H}_{(t,x)}F^k \bN(dt,dx) - \int_{[0,T]\times \X}  \mathcal{H}_{(t,x)} F \bN(dt,dx)\right|\right]=0$$
leading to 
$\quad F = F(\bO_\emptyset) + \int_{[0,T]\times \X}  \mathcal{H}_{(t,x)} F \bN(dt,dx). $
\end{proof}
\noindent We now extend the previous result to the case  $\br(\bX)=+\infty$.
\begin{theorem}
\label{t{th:pseudoCOinfinite}}
Assume $\br(\bX)=+\infty$. Let $F$ in $L^1(\Omega)$. Then
$$ F = F(\bO_\emptyset) + \int_{\real_+ \times \X} \mathcal{H}_{(t,x)} F \; \bN(dt,dx), \; \P-a.s$$
and for any subset $\mathbf R \subset \bX$ with $\br(\mathbf R)<+\infty$, $\mathcal H F \ind{\mathbf R} \in \mathbb L^1_{\mathcal P}$ . 
In addition if there exists $Z \in  \mathbb L^0_{\mathcal P}$ such that 
$$ \E\left[\left|\int_{\real_+ \times \X} Z_{(t,x)} \; \bN(dt,dx)\right|\right] <+\infty$$
and
$$ F = F(\bO_\emptyset) + \int_{\real_+ \times \X}  Z_{(t,x)}   \; \bN(dt,dx), \; \P-a.s.$$
then for a.-e. $(t,x) \in \real_+ \times \X$, $Z_{(t,x)} = \mathcal{H}_{(t,x)}F$, $\P$-a.s..
\end{theorem}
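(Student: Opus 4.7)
The plan is to treat uniqueness and existence separately, mirroring the structure of the proof of Theorem \ref{th:pseudoCOfinite}. For uniqueness, I would import the finite-case pathwise argument essentially unchanged: assume $F = F(\bO_\emptyset) + \int_{\real_+ \times \X} Z_{(s,y)} \bN(ds, dy)$ $\P$-a.s., with $Z \in \mathbb L^0_{\mathcal P}$ and $\int Z\,d\bN \in L^1(\Omega)$. To pin down $Z$, I would evaluate the decomposition at the two specific configurations $\bO_t + \delta_{(t,x)}$ and $\bO_t$ and subtract, obtaining
\[
\mathcal H_{(t,x)} F(\bO) = F(\bO_t + \delta_{(t,x)}) - F(\bO_t) = \sum_{j:\, s_j < t}\!\bigl[Z_{(s_j,y_j)}(\bO_t + \delta_{(t,x)}) - Z_{(s_j,y_j)}(\bO_t)\bigr] + Z_{(t,x)}(\bO_t + \delta_{(t,x)}),
\]
both sums being absolutely convergent thanks to $\int Z\,d\bN \in L^1(\Omega)$. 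Predictability of $Z$ gives $Z_{(s,y)}(\bO') = Z_{(s,y)}(\bO'_s)$, which combined with the identities $(\bO_t + \delta_{(t,x)})_{s_j} = \bO_{s_j} = (\bO_t)_{s_j}$ for $s_j < t$ and $(\bO_t + \delta_{(t,x)})_t = \bO_t$ forces all pre-$t$ terms to cancel pairwise, leaving exactly $\mathcal H_{(t,x)} F(\bO) = Z_{(t,x)}(\bO_t) = Z_{(t,x)}(\bO)$.

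For existence, I would proceed by localization. Take an increasing exhaustion $\bX_n \uparrow \real_+ \times \X$ with $\br(\bX_n) < +\infty$, of product form $\bX_n := [0, n] \times B_n$ with $B_n \uparrow \X$ and $\pi(B_n) < +\infty$. Let $\mathcal G_n := \sigma(\bN|_{\bX_n})$ and $F^n := \E[F \vert \mathcal G_n]$; by martingale convergence, $F^n \to F$ in $L^1(\Omega)$. Since $F^n$ depends only on $\bN|_{\bX_n}$, it is naturally identified with an $L^1$ functional on the Poisson space over $\bX_n$, and a direct check shows that the localized operator $\mathcal H^{\bX_n}_{(t,x)}$ and the ambient $\mathcal H_{(t,x)}$ coincide when applied to $F^n$ (stripping atoms outside $\bX_n$ leaves $F^n$ invariant). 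Theorem \ref{th:pseudoCOfinite} then yields, for every $n$,
\[
F^n \;=\; F^n(\bO_\emptyset) + \int_{\bX_n} \mathcal H_{(t,x)} F^n \; \bN(dt, dx), \qquad \P\textrm{-a.s.,}
\]
with $\mathcal H F^n \ind{\bX_n} \in \mathbb L^1_{\mathcal P}$. Passing to the limit $n \to +\infty$, via a localized analogue of Lemma \ref{lemma:existenceTtilde} to control the $\bN$-integrals, should yield the decomposition together with the local integrability $\mathcal H F \ind{\mathbf R} \in \mathbb L^1_{\mathcal P}$ for every $\mathbf R$ with $\br(\mathbf R) < +\infty$.

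I expect the main obstacle to be the identification of $\lim_n F^n(\bO_\emptyset)$ with $F(\bO_\emptyset)$. In the finite-measure case this was automatic because $\{\bN(\bX) = 0\}$ had positive probability and $F(\bO_\emptyset)$ was directly given by $F \ind{\bN(\bX) = 0}$, but here $\P[\bN(\bX_n) = 0] \downarrow 0$, so the value of $F$ at the empty configuration must be extracted from a specific pointwise representative of $F$ (consistent for instance with the pseudo-chaotic expansion of Theorem \ref{th:equipseudochao}). Once this identification is secured, the two-step template from the proof of Theorem \ref{th:pseudoCOfinite} --- first reducing to $F \in L^2$ via Lemma \ref{lemma:span} on each $\bX_n$, then truncating a general $L^1$ function by $\pm k$ --- should transfer to the present infinite-measure setting without further surprises.
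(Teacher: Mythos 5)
Your uniqueness argument is the same pathwise computation the paper uses (it simply invokes the proof of Theorem \ref{th:pseudoCOfinite} verbatim), so that part is fine. The existence part, however, has a genuine gap, and it is located exactly where you suspect trouble but also one step earlier. Localizing by conditional expectation $F^n:=\E[F\,\vert\,\mathcal G_n]$ destroys the pathwise link between the localized integrand and $\mathcal H F$: for $(t,x)\in\bX_n$ one has $\mathcal H_{(t,x)}F^n(\bO)=\int\bigl[F((\bO_t)\vert_{\bX_n}+\delta_{(t,x)}+\mu)-F((\bO_t)\vert_{\bX_n}+\mu)\bigr]\Pi_n(d\mu)$ (with $\Pi_n$ the Poisson law on $\bX_n^c$, whose atoms are \emph{not} stripped after time $t$), which does not coincide with $\mathcal H_{(t,x)}F(\bO)$ on any event. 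So identifying $\lim_n\int_{\bX_n}\mathcal H F^n\,d\bN$ with $\int_{\bX}\mathcal H F\,d\bN$ requires a new convergence lemma; the ``localized analogue of Lemma \ref{lemma:existenceTtilde}'' you appeal to is not available, because that lemma needs both $F$ and its approximants to be functionals on the \emph{same} finite-intensity space (its proof uses the factor $\exp(T\pi(\X))$ and the event $\{\bN([t,T]\times\X)=0\}$, which degenerate when $\br(\bX)=+\infty$), whereas your $F$ is not $\mathcal G_n$-measurable. On top of this, the constants $F^n(\bO_\emptyset)=\E[F\,\vert\,\bN(\bX_n)=0]$ have no canonical limit for a general $F\in L^1(\Omega)$, since $F$ is only defined up to null sets and $\{\bO_\emptyset\}$ is a null set; you flag this but offer no resolution.

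The paper closes both holes with a different localization: it sets $F(\bO_\emptyset):=0$ \emph{by convention} (legitimate precisely because $\P[\bN(\bX)=0]=0$, so the constant in the representation is not intrinsically determined --- this is also why the uniqueness statement in the infinite case fixes the constant a priori), and it localizes by $F_j:=F\,\ind{\Omega_j}$, where $\Omega_j$ is the set of configurations supported in $\mathbf R_j$. This multiplicative truncation has the two properties your $\E[F\,\vert\,\mathcal G_n]$ lacks: $F_j(\bO_\emptyset)=F(\bO_\emptyset)=0$ identically along the sequence, and a direct computation gives $\ind{\Omega_j}\,\mathcal H_{(t,x)}F_j=\ind{\Omega_j}\,\mathcal H_{(t,x)}F$ pathwise, so that $F_j=\int_{\real_+\times\X}\mathcal H_{(t,x)}F\,\bN^j(dt,dx)$ with $\mathcal H F$ itself (not an approximation of it) as integrand, and the passage to the limit reduces to $F_j\to F$ in $L^1$ together with $\bN^j\nearrow\bN$. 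If you want to salvage your scheme you would have to replace $\E[F\,\vert\,\mathcal G_n]$ by this indicator truncation; as written, the convergence of the stochastic integrals and the identification of the constant are both unproved.
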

\begin{proof}
The proof is divided in two parts. \\
\textbf{Uniqueness:} First note that contrary to Theorem \ref{th:pseudoCOfinite} the value of the random variable $F$ on the set $\bN(\real_+ \times \X)=0$ is not uniquely defined as $\P[\bN(\real_+ \times \X)=0]=0$. 
The same  proof as in Theorem \ref{th:pseudoCOfinite} gives that for  a.e. $(t,x) \in \real_+\times \X$, $\mathcal{H}_{(t,x)} F =Z_{(t,x)}$, $\P$-a.s..\\
\textbf{Existence:} 
{As $\br$ is a   non-atomic $\sigma$-finite measure on  $(\bX,\mathfrak X)$,} there exists a family of sets $(\mathbf R_j)_j$ such that for any $j$, $\br(\mathbf R_j) <+\infty$, $\mathbf R_j \subset \mathbf R_{j+1}$ and $\cup_j \mathbf R_j = \bX$. For any $j$ consider 
$$\Omega_j:=\left\{\bO=\sum_{j=1}^{n} \delta_{\bx_j}, \; \bx_j \in \mathbf R_j, \;j=1,\ldots,n, \; n\in \mathbb{N}\cup\{+\infty\} \right\} \subset \Omega$$
so that $\Omega_j \nearrow \Omega$ when $j$ tends to $+\infty$.
Let $F \in L^1(\Omega)$. As $\P[\bN(\bX)=0]=0$, once again we set $F(\bO_\emptyset):=0$.
Fix $j\geq 1$ and consider $F_j := F \ind{\Omega_j}$ so that $F^j \in L^1(\Omega)$. Writing $\bN^j$ the projection of $\bN$ on $\mathbf R_j$ (that is $\bN^j(A\times B):=\bN((A\times B)\cap \mathbf R_j)$) places ourselves  in the framework of Theorem \ref{th:pseudoCOfinite} and thus  
$$ F_j = F(\bO_\emptyset) + \int_{\mathbf R_j} \mathcal H_{(t,x)} F_j \bN^j(dt,dx), \; \P-a.s.$$
and $\mathcal H F_j \ind{\mathbf R_j} \in \mathbb L^1_{\mathcal P}$.
By definition $\bN^j$ coincides with $\bN$ on $\mathbf R_j$ and for $(t,x)$ in $\mathbf R_j$ we have for any $\bO \in \Omega_j$
\begin{align*}
(\mathcal H_{(t,x)} F_j)(\bO) &= (\bD_{(t,x)} F^j \circ \tau_t)(\bO)
=F^j(\bO_t + \delta_{(t,x)})-F^j(\bO_t)\\
&=F(\bO_t + \delta_{(t,x)}) \ind{\bO_t + \delta_{(t,x)} \in \Omega_j}-F(\bO_t) \ind{\bO_t \in \Omega_j}\\
&=(\bD_{(t,x)} F \circ \tau_t)(\bO) \ind{\bO_t \in \Omega_j} = (\mathcal H_{(t,x)} F)(\bO) \ind{\bO_t \in \Omega_j}.
\end{align*}
So $ \ind{\Omega_j} \mathcal H_{(t,x)} F_j = \ind{\Omega_j} \mathcal H_{(t,x)} F $
and thus
$ F_j = \int_{\mathbf R_j} \mathcal H_{(t,x)} F \, \bN^j(dt,dx), \; \P-a.s..$\\
Note also that by definition of $\bN^j$, 
$F_j = \int_{\real_+\times\X} \mathcal H_{(t,x)} F \, \bN^j(dt,dx), \; \P-a.s..$\\
By monotone convergence, $F^j$ converges to $F$ in $L^1(\Omega)$ so 
$$ \lim_{j\to+\infty} \E\left[\left|\int_{\real_+ \times \bX} \mathcal H_{(t,x)} F \bN^j(dt,dx) - \int_{\real_+ \times \bX} \mathcal H_{(t,x)} F \bN(dt,dx)\right|\right] =0$$
proving that $\int_{\real_+ \times \X} \mathcal H_{(t,x)} F \, \bN(dt,dx) \in L^1(\Omega)$.\\
\end{proof}

\begin{remark}  For the case of point processes, the Pseudo-Clark-Ocone formula  stated in Theorem \ref{th:pseudoCOfinite} coincides with the  Poisson imbedding representation   \eqref{imbedding}. Indeed   let 
 $H$ be a point process with  predictable intensity $\lambda$ defined through the thinning procedure 
\begin{equation*}
H_T = \int_{(0,T]\times \real_+} \ind{\theta\leq \lambda_t} \bN(dt,d\theta); \quad \forall T>0. 
\end{equation*}
Then 
 $H_T(\bO_\emptyset)= 0$,  and  using the previous representation of $H_T$, the difference operator  is
 \begin{align*}
 \bold{D}_{(t,x)} H_T    &= H_T (\cdot + \delta_{(t,x)}) - H_T(\cdot)\\
&= \ind{x \leq \lambda_t}  + \int_t^T   ( \ind{\theta \leq \lambda_r \circ \varepsilon^+_{(t,x)}} -  \ind{\theta \leq \lambda_r}  ) {\bN}(dr,d\theta).
\end{align*}
Therefore $\bold{D}_{(t,x)} H_T \circ \tau_t=    \ind{x \leq \lambda_t}$  which is $\mathcal F_{t^-}$-mesurable. Thus   \eqref{imbedding} re-writes as 
$$ H_T =  H_T(\bO_\emptyset) + \int_{[0,T] \times [0,\infty[} \mathcal{H}_{(t,x)} H_T \; \bN(dt,dx), \; \P-a.s..$$
Note that  the standard Clark-Ocone formula  is much more tricky to compute, since one has to compute the conditional expectation (given  $\mathcal F_{t^-}$) of the integral term 
 \begin{align*}
  H_T &=  \E(H_T) + \int_{[0,T] \times [0,\infty[}   \left( \E_{t-}\left[\ind{x \leq \lambda_t} + \int_t^T   ( \ind{\theta \leq \lambda_r \circ \varepsilon^+_{(t,x)}} -  \ind{\theta \leq \lambda_r}  ) {\bN}(dr,d\theta) \right]    \right)   \tilde{\bN}(dt,dx)\\
& =  \E\left(\int_{[0,T]}  \lambda_t dt \right) + \int_{[0,T] \times [0,\infty[}  \ind{x \leq \lambda_t}   \tilde{\bN}(dt,dx)\\
& \; \;   \; \;   \; \; +  \int_{[0,T] \times [0,\infty[}   \left( \E_{t-}\left[ \int_t^T   ( \ind{\theta \leq \lambda_r \circ \varepsilon^+_{(t,x)}} -  \ind{\theta \leq \lambda_r}  ) {\bN}(dr,d\theta) \right]    \right)   \tilde{\bN}(dt,dx).
\end{align*}

\end{remark}

\medskip
The following section gathers some technical lemmata.
\section{Technical  lemmata}\label{sseclemmata}
We first  prove convergence results of the operator  $\mathcal{H}_{(t,x)}$. More precisely:
\begin{lemma}
\label{lemma:existenceTtilde}
Assume  $\br(\bX)<+\infty$. Let $r \in [1,2]$ and $F \in L^{r}(\Omega)$. Assume there exists $(G_k)_k \subset L^{r}(\Omega)$ converging in $L^r(\Omega)$ to $F$ such that for any $k$, $\mathcal{H} G_k$ belongs to $\mathbb L^r_{\mathcal P}$. Then 
$$ \lim_{k\to+\infty} \left(\int_{[0,T]\times \X} \E\left[\left|\mathcal{H}_{(t,x)}G_k-\mathcal{H}_{(t,x)} F\right|^r\right] dt \pi(dx)\right)^{1/r}=0.$$
In particular, the stochastic process $\mathcal{H} F$ belongs to $\mathbb L^r_{\mathcal P}$ and we have the following convergence in $L^1$ 
$$ \lim_{k\to+\infty} \E\left[\left|\int_{[0,T]\times \X}  \mathcal{H}_{(t,x)}G_k \, \bN(dt,dx) - \int_{[0,T]\times \X}  \mathcal{H}_{(t,x)} F  \, \bN(dt,dx)\right|\right]=0.$$
\end{lemma}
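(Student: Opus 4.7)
The plan is to establish a continuity estimate for $\mathcal H$ as a linear map from $L^r(\Omega)$ into $\mathbb L^r_{\mathcal P}$, namely
$$\|\mathcal H G\|_{\mathbb L^r_{\mathcal P}}^r \le C\,\|G\|_{L^r(\Omega)}^r, \qquad G\in L^r(\Omega),$$
for some constant $C$ depending only on $T$, $\pi(\X)$ and $r$. Once this is proved, everything else follows. Predictability of $\mathcal H G$ is automatic since $(t,\bO)\mapsto \bO_t$ is $\mathcal F^{\bN}$-adapted and left-continuous in $t$; applying the estimate to $F$ gives $\mathcal H F\in \mathbb L^r_{\mathcal P}$; applying it to $G_k - F$ together with the linearity of $\mathcal H$ yields the first stated limit. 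The $L^1$-convergence of the stochastic integrals then follows from the pathwise inequality $|\int Z\,d\bN|\le \int |Z|\,d\bN$, the compensator identity $\E[\int |Z|\,d\bN]=\int \E[|Z|]\,d\br$ for nonnegative predictable $Z$, and H\"older's inequality combined with $\br(\bX)<+\infty$.

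To derive the continuity estimate I would start from
$$|\mathcal H_{(t,x)}G|^r \le 2^{r-1}\bigl(|G(\bO_t+\delta_{(t,x)})|^r+|G(\bO_t)|^r\bigr)$$
and control each summand separately. The key identity, valid by the independence of $\mathcal F^{\bN}_{t-}$ and $\sigma(\bN|_{[t,T]\times\X})$ together with the fact that $\bO_t=\bO$ on $\{\bN([t,T]\times\X)=0\}$, is
\begin{align*}
\E[|G(\bO_t)|^r] &= e^{(T-t)\pi(\X)}\,\E[|G|^r\,\ind{\bN([t,T]\times\X)=0}],\\
\E[|G(\bO_t+\delta_{(t,x)})|^r] &= e^{(T-t)\pi(\X)}\,\E[|G(\bO+\delta_{(t,x)})|^r\,\ind{\bN([t,T]\times\X)=0}].
\end{align*}
(Alternatively these follow from Lemma~\ref{lemma:conditionning} combined with Jensen's inequality applied to the conditional expectation.) The $\bX$-integral of the first line is trivially bounded by $T\pi(\X)e^{T\pi(\X)}\|G\|_{L^r(\Omega)}^r$.

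The $\bX$-integral of the second line is the main technical step. For this I would apply the joint form of Mecke's formula with the nonnegative integrand $h((t,x),\bO):=|G(\bO)|^r\ind{\bO([t,T]\times\X)=1}$. Under the Mecke shift $\bO\mapsto \bO+\delta_{(t,x)}$ the indicator becomes $\ind{\bO([t,T]\times\X)=0}$ (since $(t,x)\in[t,T]\times\X$), which matches our target. On the dual side one obtains $\E\bigl[\int_\bX h((t,x),\bO)\,\bN(dt,dx)\bigr]$, i.e.\ a sum over the atoms $(t,x)$ of $\bN$ of $|G|^r$ restricted to those atoms which are the unique atom of $\bN$ in $[t,T]\times\X$. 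Since every finite configuration admits at most one such \emph{latest} atom in time, this sum is bounded by $|G|^r\ind{\bN(\bX)\ge 1}\le |G|^r$, giving
$$\int_{\bX} \E[|G(\bO+\delta_{(t,x)})|^r\,\ind{\bN([t,T]\times\X)=0}]\,d\br(t,x) \le \|G\|_{L^r(\Omega)}^r,$$
which closes the estimate.

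The main obstacle is precisely this last Mecke step: one must encode the asymmetric condition ``no atom in $[t,T]\times\X$ after the shift'' as an indicator that behaves well under the Mecke transformation. The trick is to demand, in the unshifted configuration, exactly one atom in $[t,T]\times\X$, namely $(t,x)$ itself; this both matches the target indicator after shifting and produces, on the other side of Mecke's identity, a sum sparse enough (at most one contributing atom, the latest in time) to be controlled by $\|G\|_{L^r(\Omega)}^r$ uniformly in the geometry of $[t,T]\times\X$.
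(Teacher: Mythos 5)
Your proof is essentially the paper's proof in a light disguise. The identity $\E[|G(\bO_t)|^r]=e^{(T-t)\pi(\X)}\E[|G|^r\ind{\bN([t,T]\times \X)=0}]$ is exactly the content of Lemma \ref{lemma:conditionning} (the Girsanov factor $L^{T,t}$), and your Mecke step with $h((t,x),\bO)=|G(\bO)|^r\ind{\bO([t,T]\times\X)=1}$ is the same manipulation the paper performs by replacing $\ind{\bN([t,T]\times\X)=0}$ with the $\varepsilon^+_{(t,x)}$-invariant indicator $\ind{\bN((t,T]\times\X)=0}$; both reduce to the observation that only the latest atom in time contributes, so the resulting sum is $\ind{\bN(\bX)\geq 1}\leq 1$. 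The concluding H\"older/compensator argument for the $L^1$-convergence of the integrals is also identical.

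The one point that does not hold up as written is the claim that predictability of $\mathcal H G$ is ``automatic'' for every $G\in L^r(\Omega)$. Left-continuity of $t\mapsto \bO_t$ handles the term $G(\bO_t)$, but in the term $G(\bO_t+\delta_{(t,x)})$ the added atom moves with $t$, and for a merely measurable $G$ the map $s\mapsto G(\bO_t+\delta_{(s,x)})$ need not be left-continuous; this is precisely why the lemma \emph{hypothesizes} $\mathcal H G_k\in\mathbb L^r_{\mathcal P}$ and why the paper only obtains predictability of $\mathcal H F$ as an $\mathbb L^r_{\mathcal P}$-limit of the $\mathcal H G_k$ (and, in Lemma \ref{lemma:span}, establishes predictability of $\mathcal H \bpI_n(f_n)$ by approximating $f_n$ with continuous kernels). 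Your argument is easily repaired: drop the ``automatic'' claim and deduce $\mathcal H F\in\mathbb L^r_{\mathcal P}$ from the hypothesis on the $G_k$ together with your continuity estimate applied to $G_k-F$, since $\mathbb L^r_{\mathcal P}$ is closed under the $\|\cdot\|_{\mathbb L^r_{\mathcal P}}$ convergence you establish. With that correction the proof is complete and matches the paper's.
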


\begin{proof}
We adapt the proof of \cite[Lemma 2.3]{Last:2011aa} and give only the main arguments. By definition of the Malliavin derivative (see Definition \ref{definition:Dn}) and Lemma \ref{lemma:conditionning}.
\begin{align*}
&\left(\int_{[0,T]\times \X} \E\left[\left|\mathcal H_{(t,x)} G_k- \mathcal H_{(t,x)} F\right|^r\right] dt \pi(dx)\right)^{1/r} \\
&=\E\left[\int_{[0,T]\times \X} \left|\E_{t-}\left[L^{T,t}\bD_{(t,x)} G_k\right]- \E_{t-}\left[L^{T,t}\bD_{(t,x)} F\right]\right|^r dt \pi(dx)\right]^{1/r} \\
&= \E\left[\int_{[0,T]\times \X} \left|\E_{t-}\left[L^{T,t}(\bD_{(t,x)} G_k-\bD_{(t,x)} F)\right]\right|^r dt \pi(dx)\right]^{1/r} \\
&\leq \E\left[\int_{[0,T]\times \X} \left|L^{T,t}(\bD_{(t,x)} G_k-\bD_{(t,x)} F)\right|^r dt \pi(dx)\right]^{1/r} \\
&= \E\left[\int_{[0,T]\times \X} (L^{T,t})^r \left|\bD_{(t,x)} G_k-\bD_{(t,x)} F\right|^r dt \pi(dx)\right]^{1/r} \\
&\leq C_r \E\left[\int_{[0,T]\times \X} (L^{T,t})^r \left|G_k\circ \varepsilon^+_{(t,x)}-F\circ \varepsilon^+_{(t,x)}\right|^r dt \pi(dx)\right]^{1/r} \\
&+ C_r \E\left[\int_{[0,T]\times \X} (L^{T,t})^r \left|G_k-F\right|^r dt \pi(dx)\right]^{1/r},
\end{align*}
where $C_r>0$ is a combinatorial constant depending only on $r$.  
Using Mecke's formula (see Lemma \ref{lemma:Mecke}) 
\begin{align*}
&\E\left[\int_{[0,T]\times \X} (L^{T,t})^r \left|G_k\circ \varepsilon^+_{(t,x)}-F\circ \varepsilon^+_{(t,x)}\right|^r dt \pi(dx)\right]^{1/r}  \\
&=\E\left[\int_{[0,T]\times \X} \exp(r(T-t)\pi(\X)) |F\circ \varepsilon^+_{(t,x)}-G_k\circ \varepsilon^+_{(t,x)}|^r \ind{\bN([t,T]\times \X)=0}   dt \pi(dx)\right]^{1/r}\\
&\leq \exp(T\pi(\X)) \E\left[\int_{[0,T]\times \X} |F\circ \varepsilon^+_{(t,x)}-G_k\circ \varepsilon^+_{(t,x)}|^r \ind{\bN((t,T]\times \X)=0}   dt \pi(dx)\right]^{1/r}\\
&= \exp(T\pi(\X)) \E\left[\int_{[0,T]\times \X} |F\circ \varepsilon^+_{(t,x)}-G_k\circ \varepsilon^+_{(t,x)}|^r \ind{\bN((t,T]\times \X)=0}\circ \varepsilon^+_{(t,x)}   dt \pi(dx)\right]^{1/r}\\
&= \exp(T\pi(\X)) \E\left[\int_{[0,T]\times \X} |F-G_k|^r \ind{\bN((t,T]\times \X)=0}  \bN(dt,dx)\right]^{1/r}\\
&= \exp(T\pi(\X)) \E\left[|F-G_k|^r \int_{[0,T]\times \X} \ind{\bN((t,T]\times \X)=0}  \bN(dt,dx)\right]^{1/r}.
\end{align*}
Fix $\bO = \sum_{j=1}^{n} \delta_{{(t_j,x_j)}}$, $(t_j,x_j)\in [0,T]\times \X$ and $n\in \mathbb N$ (as $\pi(\X)<+\infty$, $\bO([0,T]\times \X)<+\infty$ $\P$-a.s.). We have
\begin{align*}
&\int_{[0,T]\times \X}\ind{\bN((t,T]\times \X)=0} \bN(dt,dx)(\bO)\\
&=\sum_{j=1}^{n} \ind{\bO((t_j,T]\times \X)=0} \\
&=\ind{\bO((t_n,T]\times \X)=0} =\ind{\bO((0,T]\times \X) > 0} .
\end{align*}
Hence
\begin{align*}
&\E\left[\int_{[0,T]\times \X} (L^{T,t})^r \left|G_k\circ \varepsilon^+_{(t,x)}-F\circ \varepsilon^+_{(t,x)}\right|^r dt \pi(dx)\right]^{1/r}  \\
&\leq \exp(rT\pi(\X)) \E\left[|F-G_k|^r \ind{\bN((0,T]\times \X) > 0}\right]^{1/r}\\
&\leq \exp(rT\pi(\X)) \E\left[|F-G_k|^r\right]^{1/r} \to_{k\to +\infty} 0.
\end{align*}
Similarly
\begin{align*}
&\E\left[\int_{[0,T]\times \X} (L^{T,t})^r \left|G_k-F\right|^r dt \pi(dx)\right]^{1/r} \\
&=\E\left[\left|G_k-F\right|^r \int_{[0,T]\times \X} (L^{T,t})^r dt \pi(dx)\right]^{1/r} \\
&\leq \exp(T\pi(\X)) (T \pi(\X))^{1/r} \E\left[\left|G_k-F\right|^r\right]^{1/r} \to_{k\to +\infty} 0.
\end{align*}
In addition as $\mathcal H G_k$ converges in $\mathbb L^r_{\mathcal P}$ it converges pointwise and thus $\mathcal H F$ belongs to $\mathbb L^r_{\mathcal P}$. 
Furthermore 
\begin{align*}
&\E\left[\left|\int_{[0,T]\times \X}  \mathcal{H}_{(t,x)}G_k \bN(dt,dx) - \int_{[0,T]\times \X}  \mathcal{H}_{(t,x)} F \bN(dt,dx)\right|\right]\\
&\leq\E\left[\int_{[0,T]\times \X}  \left|\mathcal{H}_{(t,x)}G_k-\mathcal{H}_{(t,x)} F\right| \bN(dt,dx)\right] \\
&=\E\left[\int_{[0,T]\times \X}  \left|\mathcal{H}_{(t,x)}G_k-\mathcal{H}_{(t,x)} F\right| dt \pi(dx)\right] \\
&\leq (T \pi(\X))^{\frac{r-1}{r}} \E\left[\int_{[0,T]\times \X}  \left|\mathcal{H}_{(t,x)}G_k-\mathcal{H}_{(t,x)} F\right|^r dt \pi(dx)\right]^{1/r} \to_{p\to+\infty} 0
\end{align*}
using H\"older inequality.
\end{proof}

Finally, we prove that for any $n\in \mathbb N$ the space generated by the iterated integrals $\bold{I}_n(f_n)$ and $\bpI_n(f_n) $ (for   $f_n \in L^r(\bX^n)$) are identical.
\begin{lemma}
\label{lemma:span}
Assume $\br(\bX)<+\infty$ and let $r\geq 1$. 
$$ \textrm{Span}\left\{ \bold{I}_k(f_k); \; k \in [\![ 0, n ]\!],  \;  f_k \in L^r(\bX^k)\right\} = \textrm{Span}\left\{ \bpI_k(f_k);  \; k \in [\![ 0, n ]\!],  \;  f_k \in L^r(\bX^k)\right\}$$

where $\bold{I}_0(f):=f$ and $\bpI_0(f):=f$ for $f\in \mathbb R$. In addition for any $n\in \mathbb N^*$ and $f_n \in L^r(\bX^n)$,
\begin{equation}\label{CO1}
 \bold{I}_n(f_n) = \int_{[0,T]\times \X} {\E_{t-}[\bD_{(t,x)} \bold{I}_n(f_n)]} \tilde{\bN}(dt,dx), \end{equation}
\begin{equation} \label{CO2}
 \bpI_n(f_n) =  \int_{[0,T]\times \X} {\mathcal H_{(t,x)} \bpI_n(f_n)} \bN(dt,dx),
 \end{equation}
 and $\mathcal H \bpI_n(f_n)$ belongs to $\mathbb L^1_{\mathcal P}$. Finally if $r=2$, then ${\E_{\cdot-}[\bD \, \bold{I}_n(f_n)]} = (\bD \, \bold{I}_n(f_n))^p$ and it belongs to $ \mathbb L^2_{\mathcal P}$.
\end{lemma}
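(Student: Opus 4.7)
The plan is to treat the three assertions of the lemma in sequence: first the span equality, second the two integral representations, and third the integrability statements.

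\textbf{Span equality.} I would argue by induction on $n$. The base case $n=0$ is tautological since $\bold{I}_0(f)=\bpI_0(f)=f$. For the inductive step, isolating the term $J=\{1,\ldots,n\}$ in the definition of $\bold{I}_n(f_n)$ gives
\[
\bold{I}_n(f_n) = \bpI_n(f_n) + \sum_{J \subsetneq \{1,\ldots,n\}} (-1)^{n-|J|}\,\bpI_{|J|}(g_J),
\]
with $g_J(\bx_J):=\int_{\bX^{n-|J|}} f_n\,\br^{\otimes(n-|J|)}(d\bx_{J^c})$, which belongs to $L^r(\bX^{|J|})$ by Jensen's inequality since $\br(\bX)<+\infty$. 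This gives one inclusion; solving for $\bpI_n(f_n)$ and invoking the induction hypothesis on the lower-order $\bpI_{|J|}(g_J)$ terms yields the reverse.

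\textbf{Clark--Ocone identities.} Taking $f_n$ symmetric (allowed by the final observation in Definition~\ref{definition:interated}), I would establish pathwise
\[
\bD_{(t,x)} \bpI_n(f_n)(\bO) = n\,\bpI_{n-1}\big(f_n((t,x),\cdot)\big)(\bO),
\]
by expanding $\bpI_n(f_n)(\bO+\delta_{(t,x)})$ as the sum over ordered $n$-tuples of distinct atoms of $\bO+\delta_{(t,x)}$, subtracting the analogous sum for $\bO$, and using symmetry to collect the $n$ positions in which the new atom appears. Composing with $\tau_t$ yields an integrand that is left-continuous in $t$, hence in $\mathbb L^0_{\mathcal P}$. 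To check \eqref{CO2}, I would unfold $\int \mathcal H_{(t,x)}\bpI_n(f_n)\,\bN(dt,dx)$ by enumerating the atoms of $\bO$ in time order as $\bx_{(1)},\ldots,\bx_{(k)}$: the integral equals $\sum_i n\,\bpI_{n-1}(f_n(\bx_{(i)},\cdot))$ evaluated on the atoms with index strictly less than $i$, and after symmetrization this recovers $n!\sum_{i_1<\cdots<i_n} f_n(\bx_{(i_1)},\ldots,\bx_{(i_n)}) = \bpI_n(f_n)$. For \eqref{CO1}, the analogous derivative formula $\bD_{(t,x)}\bold{I}_n(f_n) = n\,\bold{I}_{n-1}(f_n((t,x),\cdot))$ (inherited via the span expansion of Step~1) combined with the martingale property of compensated multiple integrals gives $\E_{t-}[\bD_{(t,x)}\bold{I}_n(f_n)] = n\,\bold{I}_{n-1}(f_n((t,x),\cdot)\,\mathbf 1_{[0,t)^{n-1}})$, and the right-hand side of \eqref{CO1} is then the standard iterated It\^o representation of $\bold{I}_n(f_n)$.

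\textbf{Integrability and predictable projection.} For the $\mathbb L^1_{\mathcal P}$ claim, Mecke's formula (Lemma~\ref{lemma:Mecke}) yields
\[
\int_{[0,T]\times\X} \E\big[|\bpI_{n-1}(f_n((t,x),\cdot))|\big]\,dt\,\pi(dx) \leq \|f_n\|_{L^1(\bX^n)},
\]
which is finite since $\br(\bX)<+\infty$ gives $L^r(\bX^n)\subset L^1(\bX^n)$ for $r\geq 1$, proving $\mathcal H \bpI_n(f_n)\in\mathbb L^1_{\mathcal P}$. When $r=2$, the $L^2$-isometry for compensated iterated integrals gives the $\mathbb L^2_{\mathcal P}$ bound on $\E_{\cdot-}[\bD\,\bold{I}_n(f_n)]$, and the identification with the predictable projection is precisely the statement recalled in Remark~\ref{COlimitL2}. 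The main obstacle I anticipate is the combinatorial bookkeeping in \eqref{CO2}: the factor $n$ produced by the derivative must combine with the factor $(n-1)!$ coming from the inner $\bpI_{n-1}$ acting on the strictly earlier atoms to reproduce the full $n!$ symmetrization, so the time-ordering of atoms and the role of the outer $\bN$-integration as supplying the last coordinate must be tracked carefully.
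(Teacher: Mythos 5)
Your architecture largely tracks the paper's: the span equality rests on the same algebraic identity (the $J=\{1,\ldots,n\}$ term of $\bold{I}_n(f_n)$ is exactly $\bpI_n(f_n)$ and the remaining terms are lower-order $\bpI_{|J|}(g_J)$ with $g_J\in L^r$ by integrating out the $J^c$ variables), the derivative formulas $\bD_{(t,x)}\bpI_n(f_n)=n\,\bpI_{n-1}(f_n((t,x),\cdot))$ and $\bD_{(t,x)}\bold{I}_n(f_n)=n\,\bold{I}_{n-1}(f_n((t,x),\cdot))$ are the ones the paper uses, and the $L^1$ bound via Mecke and the $r=2$ isometry are as in the paper. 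Two of your choices differ legitimately: you obtain the reverse span inclusion by inverting the triangular system inductively, where the paper invokes Proposition 12.11 of \cite{Last:2011aa} to expand $\bpI_n(f_n)=\sum_{j=0}^n\bold{I}_j(g_j)$; and you verify \eqref{CO2} by a direct time-ordered enumeration of atoms (whose combinatorics, $n\cdot(n-1)!=n!$, do check out), where the paper deduces it from the definition of $\bpI_n$ together with the independently proved uniqueness statement of Theorem \ref{th:pseudoCOfinite}. Both substitutions are sound and arguably more explicit than the original.

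The genuine gap is in the predictability claims. You assert that $\mathcal H_{(t,x)}\bpI_n(f_n)=n\,\bpI_{n-1}\bigl(f_n((t,x),\cdot)\ind{[0,t)\times \X}^{\otimes (n-1)}\bigr)$ is ``left-continuous in $t$, hence in $\mathbb L^0_{\mathcal P}$''. The indicator factor is indeed left-continuous, but for a general $f_n\in L^r(\bX^n)$ the map $t\mapsto f_n((t,x),\cdot)$ is merely measurable, so the integrand is not left-continuous and your justification of $\mathcal P$-measurability collapses; note that membership in $\mathbb L^1_{\mathcal P}$ requires predictability in addition to the finiteness of the norm that Mecke's formula delivers. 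The same issue affects the $r=2$ claim: the identity $\E_{\cdot-}[\bD\,\bold{I}_n(f_n)]=(\bD\,\bold{I}_n(f_n))^p$ is not ``precisely'' Remark \ref{COlimitL2}, since the predictable projection is by definition a predictable process and one must exhibit a predictable version of the left-limit conditional expectation. The paper closes both holes in its Step 4 by approximating $f_n$ in $L^r(\bX^n)$ by continuous bounded $f_n^\ell$ --- for which the integrand genuinely is left-continuous in $t$, hence predictable --- and passing to the limit via Lemma \ref{lemma:existenceTtilde} (resp.\ the $\bold{I}_{n-1}$ isometry), predictability being stable under $\mathbb L^r_{\mathcal P}$-convergence. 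You need this approximation step, or an alternative argument (e.g.\ that $(t,x,\bO)\mapsto(t,x,\tau_t(\bO))$ is measurable from $\mathcal P$ into $\mathfrak X\otimes\mathcal F$ because $t\mapsto\tau_t(\bO)$ is left-continuous and adapted); as written, the predictability of the integrands in \eqref{CO1} and \eqref{CO2} is unproven.
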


\begin{proof}
Assume $\br(\bX)<+\infty$. Recalling that $\bold{I}_n(f_n) = \bold{I}_n(\tilde f_n)$ and $\bpI{I}_n(f_n) = \bpI_n(\tilde f_n)$ where $\tilde f_n$ denotes the symmetrization of $f_n$, without loss of generality we assume that all the mappings below are symmetric.\\
\underline{Step 1:}
Let $n\in \mathbb N^*$ and $F:=\bpI_n(f_n)$ with $f_n \in {L^r_s(\bX^n)}$, $r\geq 1$. Then according to Proposition 12.11 in \cite{Last:2011aa}, $F = \sum_{j=0}^n \bold{I}_j(g_j)$ with 
$$g_j(\bx_1,\ldots,\bx_{j}):=\frac{n!}{j! (n-j)!} \int_{\bX^{n-j}} f_n(\by_1,\ldots,\by_{n-j},\bx_1,\ldots,\bx_{j}) \br^{\otimes (n-j)}(d\by_1,\ldots,d\by_{n-j}) \in L^r(\bX^{n-j})$$
as
\begin{align*}
&\|\int_{\bX^{n-j}} f_n(\by_1,\ldots,\by_{n-j},\cdot) \br^{\otimes (n-j)}(d\by_1,\ldots,d\by_{n-j})\|_{L^r(\bX^{n-j})} \\
&\leq \int_{\bX^{n-j}} \| f_n(\by_1,\ldots,\by_{n-j},\cdot)\|_{L^r(\bX^{n-j})} \br^{\otimes (n-j)}(d\by_1,\ldots,d\by_{n-j}) \\
&= \int_{\bX^{n-j}} \left(\int_{\bX^{j}} |f_n(\by_1,\ldots,\by_{n-j},\bx_1,\ldots,\bx_{j})|^r \br^{\otimes (n-j)}(d\bx_1,\ldots,d\bx_j)\right)^{1/r} \br^{\otimes (n-j)}(d\by_1,\ldots,d\by_{n-j}) \\
&\leq (T\pi(\X))^{\frac{r}{r-1}} \|f_n\|_{L^r(\bX^n)}.
\end{align*}
Let $n\in \mathbb N^*$ and $F:=\bold{I}_n(f_n)$ with $f_n \in L^r(\bX^n)$, $r\geq 1$. Then 
\begin{align*}
F &=\sum_{J\subset \{1,\ldots,n\}} (-1)^{n-|J|} \int_{\bX^{n-|J|}} \int_{\bX^{|J|}} f_n(\bx_1,\ldots,\bx_n) \; \bN^{(|J|)}(d\bx_J) \br^{\otimes(n-|J|)}(d\bx_{J^c})\\
&= (-1)^{n} \int_{\bX^{n}} f_n(\bx_1,\ldots,\bx_n) \br^{\otimes n}(d\bx_1,\ldots,d\bx_n)\\
&+ \sum_{J\subset \{1,\ldots,n\}; J \neq \emptyset} (-1)^{n-|J|} \int_{\bX^{n-|J|}} \int_{\bX^{|J|}} f_n(\bx_1,\ldots,\bx_n) \; \bN^{(|J|)}(d\bx_J) \br^{\otimes (n-|J|)}(d\bx_{J^c})\\
&= (-1)^{n} \int_{\bX^{n}} f_n(\bx_1,\ldots,\bx_n) \br^{\otimes n}(d\bx_1,\ldots,d\bx_n)\\
&+ \sum_{k=1}^{n} \int_{\bX^{k}}  \frac{n!}{k!(n-k)!} (-1)^{n-k} \int_{\bX^{n-k}} f_n(\by_1,\ldots,\by_{n-k},\bx_1,\ldots,\bx_k)\br^{\otimes (n-k)}(d\by) \bN^{(k)}(d\bx) \\
&= \bpI_0(g_0) + \sum_{k=1}^{n}  \bpI_k(g_k)
\end{align*}
with $g_0:=(-1)^{n} \int_{\bX^{n}} f_n(\bx_1,\ldots,\bx_n) \br^{\otimes n}(d\bx_1,\ldots,d\bx_n)$ and as {above}
$$ g_k := \frac{n!}{k!(n-k)!} (-1)^{n-k} \int_{\bX^{n-k}} f_n(\by_1,\ldots,\by_{n-k},\cdot)\br^{\otimes(n-k)}(d\by) \in L^r(\bX^{k})$$
Then step1 together with step 2 implies that
$$ \textrm{Span}\left\{ \bold{I}_k(f_k); \; k \in [\![ 0, n ]\!],  \;  f_k \in L^r(\bX^k)\right\} = \textrm{Span}\left\{ \bpI_k(f_k);  \; k \in [\![ 0, n ]\!],  \;  f_k \in L^r(\bX^k)\right\}.$$
\underline{Step 3:}
Let $n\in \mathbb N^*$ and $f_n\in {L^r_s(\bX^n)}$. We have that for any $(t,x) \in \bX$,
$$ \bD_{(t,x)} \bold{I}_n(f_n) = n \bold{I}_{n-1}(f_n((t,x),\cdot))$$
and thus 
$$ \E_{t-}[\bD_{(t,x)} \bold{I}_n(f_n)] = n \bold{I}_{n-1}\left(f_n((t,x),\cdot) \ind{[0,t)\times \X}^{\otimes (n-1)}\right),$$
with {$\ind{[0,t)\times \X}^{\otimes (n-1)}((t_2,x_2),\ldots,(t_n,x_n)):=\prod_{i=2}^n \ind{t_i <t}$.}
The Clark-Ocone representation \eqref{CO1} follows from classical Malliavin's calculus. Representation \eqref{CO2} follows from the definition of integrals $\bpI$ involving the non-compensated Poisson measure $\bN$. Thus uniqueness part of the proof gives that \eqref{CO2} is in force. The last thing to be proved is that both integrands in \eqref{CO1}  and \eqref{CO2}   are predictable. \\
\\
\underline{Step 4:}
 Let $n\geq 1$ and  $f_n \in L^r(\bX^n)$. Using classical approximations (as $\br(\bX)<+\infty$) there exists a sequence $(f_n^{\ell})_{\ell \geq 1} \subset C_b(\bX)$ (the set of bounded and continuous functions on $\bX$) such that $f_n^{\ell} \to_{\ell \to +\infty} f_n$ in $L^r(\bX^n)$. Set $F^\ell :=\bpI_n(f_n^{\ell})$ {where by abuse of notation we make use of the same notation of $f_n^\ell$ for its symmetrization}. We have that 
$$ \E\left[\left|F^\ell - F\right|\right] \leq \int_0^T \int_{\X} \E\left[\left|n \bpI_{n-1}(f_n^{\ell}((t,x),\cdot)) - n \bpI_{n-1}(f_n((t,x),\cdot))\right|\right] \pi(dx)dt \leq \|f_n^\ell - f_n\|_{\mathbb L^1_{\mathcal P}} \to 0.$$    
Thus,
$$ \mathcal H_{(t,x)} F^\ell = n \bpI_{n-1}(f_n^{\ell}((t,x),\cdot)\ind{[0,t)\times \X}^{\otimes (n-1)}) $$
is left continuous in $t$ for any $x$, since $f_n^{\ell}$ is a continuous function. Hence we have that $\mathcal H F^\ell $ belongs to $\mathbb L^1_{\mathcal P}$ and so is $\mathcal H F$ by Lemma \ref{lemma:existenceTtilde}.\\\\
\noindent
Similarly if $r=2$, thanks to the isometry property of the operator $ \bold{I}_n$ (see the comment at the end of Definition \ref{definition:interated})
\begin{align*}
&\int_0^T \int_{\X} \E\left[\left|\E_{t-}[\bold{I}_{n}(f_n)]-\E_{t-}[\bold{I}_{n}(f_n^\ell)]\right|^2\right] \pi(dx) dt\\
&=\int_0^T \int_{\X} \E\left[| n \bold{I}_{n-1}\left(f_n((t,x),\cdot)\ind{[0,t)\times \X}^{\otimes (n-1)} \right) - n  \bold{I}_{n-1}\left(f_n^l((t,x),\cdot)\ind{[0,t)\times \X}^{\otimes (n-1)} \right) |^2\right] \pi(dx) dt\\
&=n^2 \|f_n-f_n^\ell\|^2_{L^2(\bX^n)} \to_{\ell\to +\infty} 0.
\end{align*}
Once again the continuity of the maps $f_n^\ell$ gives that $\E_{t-}[\bold{I}_{n}(f_n)]$ is the predictable projection of $\bD \bold{I}_n(f_n)$ and thus that it belongs to $\mathbb L^2_{\mathcal P} $.
\end{proof}

\subsection*{Conclusion}
{This paper extends  to any integrable Poisson-functional  the Poisson imbedding that  provides a representation of any point process  (with intensity  $(\lambda_t)_t$) as the integral of  $\ind{\theta\leq \lambda_t}$  with respect to an uncompensated Poisson measure $\bN(dt,d\theta)$. More precisely, we provide  for any   $F\in  L^1(\Omega)$  a Pseudo-Clark-Ocone representation formula  with respect to the uncompensated Poisson measure and whose  integrand is fully characterized as $ \mathcal{H}_{(t,x)} F =\bD_{(t,x)}F \circ \tau_t $. Besides $\mathcal H F$ is well defined in  $ \mathbb L^1_{\mathcal P}$, contrary to the standard Clark-Ocone formula which requires  $F \in L^2(\Omega)$ and whose integrand $(\bD_{(t,x)}F)^p$ is only defined as a limit in $L^2(\Omega\times \bX)$. }

\section{Appendix}\label{Appendix}
For sake of completeness, we provide below some technical elements on the standard Clark-Ocone formula.
\begin{lemma}
Consider $X$ a $ \mathcal F^{\bN}\otimes \mathfrak{X}$-measurable process with $\int_{[0,T]\times \X}\E[|X_{(t,x)}|]dt\pi(dx)<+\infty$. Then  there exists a unique element $X^p\in\mathbb L^1_{\mathcal P} $ such that for any predictable stopping time $\tau$
$$
\E\left[ X_{\tau,x} |\mathcal F_{\tau -}^{\bN} \right]\ind{\tau<\infty} = X^p_{\tau,x}\ind{\tau<\infty}.
$$
\end{lemma}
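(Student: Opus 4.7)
The plan is to prove uniqueness first and then construct $X^p$ through a functional monotone class argument modeled on the classical one-parameter predictable projection theorem (see Dellacherie-Meyer, \emph{Probabilit\'es et potentiel}), while simultaneously ensuring joint measurability in the $\X$-parameter.

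\textbf{Uniqueness.} Suppose $X^p$ and $\tilde X^p$ both lie in $\mathbb L^1_{\mathcal P}$ and satisfy the stated identity at every predictable stopping time $\tau$. Their difference $Y := X^p - \tilde X^p$ is $\mathcal P$-measurable with $\E[Y_{\tau,x}\ind{\tau<\infty}]=0$ for every predictable $\tau$ and every $x\in\X$. Since predictable sets of the form $\llbracket 0,\tau\rrbracket$ generate $\mathcal P^\bN$ via the predictable section theorem, and $\mathcal P = \mathcal P^\bN\otimes\mathcal B(\X)$, a $\pi$-$\lambda$ argument (or Fubini applied to $\pi$-fibers in $x$) forces $Y=0$, $dt\otimes\pi\otimes d\P$-a.e.

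\textbf{Existence.} On simple processes of the tensor form $X_{(t,x)}(\omega)=\phi(x)H(\omega)\ind{(s,T]}(t)$ with $\phi$ bounded $\mathcal X$-measurable, $s\in[0,T]$, and $H$ bounded $\mathcal F_T^\bN$-measurable, I would set
$$X^p_{(t,x)}:=\phi(x)\ind{(s,T]}(t)\,M_{t-},$$
where $M_t:=\E[H\mid\mathcal F_t^\bN]$ is chosen in its c\`adl\`ag modification. The process $X^p$ is manifestly $\mathcal P$-measurable (as the product of an $\mathcal X$-measurable function of $x$, a deterministic indicator in $t$, and a left-continuous adapted process), and the required identity at a predictable stopping time $\tau$ reduces to $\E[H\ind{s<\tau\leq T}\mid\mathcal F^\bN_{\tau-}]=M_{\tau-}\ind{s<\tau\leq T}$, which is exactly the optional sampling theorem for c\`adl\`ag martingales at predictable times. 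Finite linear combinations of such tensors form an algebra generating $\mathcal F_T^\bN\otimes\mathfrak X$, and the class of processes admitting such a predictable projection is stable under bounded monotone limits, so the functional monotone class theorem delivers $X^p$ for every bounded $\mathcal F_T^\bN\otimes\mathfrak X$-measurable $X$. Extension to general $X\in L^1$ follows by truncation $X=\lim_k(-k\vee X\wedge k)$, combined with the $L^1$-contraction $\|X^p\|_{\mathbb L^1_{\mathcal P}}\leq\|X\|_{L^1(\Omega\times\bX)}$ inherited from conditional Jensen's inequality, which ensures that the $X^p$ of the truncations form a Cauchy sequence in $\mathbb L^1_{\mathcal P}$.

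\textbf{Main obstacle.} The most delicate point is securing joint $\mathcal P$-measurability in $(t,x,\omega)$: a naive pointwise construction of $X^p(\cdot,x)$ in $x$ via the classical one-parameter theorem leaves open whether the family of projections can be glued into a single $\mathcal P$-measurable process, since the classical theorem only delivers uniqueness up to indistinguishability and provides no canonical selection as $x$ varies. The monotone class route circumvents this altogether by producing $X^p$ as a \emph{bona fide} $\mathcal P$-measurable object at every step, with the algebra of product-form simple processes providing both the starting point and the mechanism of extension.
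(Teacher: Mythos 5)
Your proposal is correct in substance but takes a genuinely different, more self-contained route than the paper. The paper disposes of the bounded case in one line by citing Lemma 3.3 of Zhang \cite{zhang2009clark} (where $X$ is bounded and the joint measurability in $(t,x,\omega)$ is already built in), and then only performs the truncation and monotone-convergence step to pass from bounded to integrable $X$. You instead reconstruct the bounded case from scratch: tensor processes $\phi(x)H(\omega)\ind{(s,T]}(t)$, the identity $\E[H\mid\mathcal F^{\bN}_{\tau-}]=M_{\tau-}$ at predictable times for the c\`adl\`ag martingale $M_t:=\E[H\mid\mathcal F^{\bN}_t]$, and a functional monotone class argument over the generating algebra of $\mathcal F^{\bN}\otimes\mathfrak X$. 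This buys a proof that does not outsource the joint $\mathcal P^{\bN}\otimes\mathcal B(\X)$-measurability (your ``main obstacle'' paragraph is precisely what the citation to Zhang is covering), at the cost of length; both arguments then coincide on the $L^{1}$ extension by truncation, and your uniqueness argument via the predictable section theorem applied fibre by fibre in $x$ is fine.

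One step should be tightened. Concluding via the Cauchy property of $((-k\vee X\wedge k)^{p})_{k}$ in $\mathbb L^{1}_{\mathcal P}$ is not quite enough: convergence in $\mathbb L^{1}_{\mathcal P}$ controls the process only $dt\otimes\pi(dx)\otimes\P$-a.e., whereas the defining identity must hold at the random point $(\tau(\omega),x,\omega)$ for \emph{every} predictable $\tau$, which is not a $dt$-a.e. statement. The clean fix --- and what the paper's ``by monotone convergence'' is implicitly doing --- is to split $X=X^{+}-X^{-}$ and observe that for $X\geq 0$ the projections $(X\wedge k)^{p}$ increase up to evanescence (again by the section theorem), so that their pointwise limit is $\mathcal P$-measurable and conditional monotone convergence yields the identity at every predictable time directly; note that $-k\vee X\wedge k$ is not monotone in $k$ for signed $X$, so this splitting is needed in your write-up just as it is tacitly needed in the paper's. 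Your $L^{1}$-contraction $\E[|X^{p}_{(t,x)}|]\leq \E[|X_{(t,x)}|]$ (deterministic times are predictable, then Fubini) correctly shows the resulting limit belongs to $\mathbb L^{1}_{\mathcal P}$.
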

\begin{proof}
The proof essentially follows Lemma 3.3 in \cite{zhang2009clark}, in which $X$ is assumed to be bounded.
Let $X$ such that $\int_{[0,T]\times \X}\E[|X_{(t,x)}|]dt\pi(dx)<+\infty$ and we set $X^k := -k \vee X \wedge k$ for $k\in \mathbb N^*$. For any $k \geq 1$, there exists a unique $\mathcal P$-measurable process $Y^k$ such that for any predictable stopping $\tau$ and $x\in\X$\\
$$
\E\left[ X^k_{\tau,x} |\mathcal F_{\tau -}^{\bN} \right]\ind{\tau<\infty} = Y^k_{\tau,x}\ind{\tau<\infty} \; \mathbb P-a.s..
$$
By monotone convergence for any $x \in\X$,
$$
\E\left[ X_{\tau,x} |\mathcal F_{\tau -}^{\bN} \right]\ind{\tau<\infty} = Y_{\tau,x}\ind{\tau<\infty} \; \mathbb P-a.s.,
$$
where $Y:= \lim_{k\to +\infty}Y^k$ belong to $\mathbb L^1_{\mathcal P}$.
\end{proof}

We now  give a concise  proof of the  standard Clark-Ocone formula  for  any $F \in  L^2(\Omega)$ (Theorem \ref{th:CO}). In particular, following \cite[Section 3.2]{Privault_2009},  we  make precise the definition of the integrand $\left({ \bD F} \right)^p$ in $L^2(\Omega\times \bX)$ in a limiting procedure and based on the continuous feature of the mapping $F \mapsto \left(\E_{t-}\left[\bD_{(t,x)} F\right]\right)_{(t,x)}$ together with the fact that for fixed $(t,x)$, $\E_{t-}\left[\bD_{(t,x)} F\right] = \left({ \bD_{(t,x)} F} \right)^p \; \mathbb P-a.s.$.

\begin{proof}
The chaotic expansion (Theorem \ref{th:chaoticgeneral}) entails that $V:=\textrm{Span}\left\{ \bold{I}_n(f_n); \quad n\in \mathbb N, \;  f_n \in L^2(\bX)\right\}$ is dense in $L^2(\Omega)$. Let $F:=\bold{I}_n(f_n) \in V$ with $n>0$ and without loss of generality $f_n$ is assumed to be symmetric. We have that 
$$ \bD_{(t,x)} F = n \bold{I}_{n-1}(f_n((t,x),\cdot)) $$
and by Lemma \ref{lemma:span}
$$ (\bD_{(t,x)} F)^p =  \E_{t-}[\bD_{(t,x)} F] = n \bold{I}_{n-1}(f_n((t,x),\cdot)\ind{([0,t)\times \X)^{\otimes (n-1)}}(\cdot)) $$
so that
\begin{equation}
\label{eq:COtemp1}
F = \int_{[0,T] \times \X} \E_{t-}\left[\bD_{(t,x)} F\right] \tilde{\bN}(dt,dx),
\end{equation}
and 
\begin{align*}
&\E\left[\int_{[0,T]\times \X} \left|\E_{t-}\left[\bD_{(t,x)} F\right]\right|^2 dt\pi(dx)\right]\\
&= n^2 \int_{[0,T]\times \X}  \E\left[\left|\bold{I}_{n-1}(f_n((t,x),\cdot)\ind{([0,t)\times \X)^{\otimes (n-1)}}(\cdot))\right|^2 \right]dt\pi(dx)\\
&= n (n-1)! \int_{([0,T]\times \X)^n} \left|f_n\right|^2 (dt\pi(dx))^{\otimes n} \\
& =\E[|F|^2] <+\infty.
\end{align*}
Note that by orthogonality of the operators $\bold{I}_n$ the previous result extends to any element $F$ on $V$ with $\E[F]=0$, that is for any $F\in V$ with $\E[F]=0$, 
$$\E\left[\int_{[0,T]\times \X} \left|\E_{t-}\left[\bD_{(t,x)} F\right]\right|^2 dt\pi(dx)\right]=\E[|F|^2] <+\infty.$$
Let the operators $U$ and $\tilde U$ defined as 
$$ 
U:\begin{array}{ll}
V &\to \mathbb L^2_{\mathcal P}\times \real_+ \\
F & \mapsto \left(\left(\E_{t-}[\bD_{(t,x)} F]\right)_{(t,x)}; \E[F]^2\right).
\end{array}
$$

$$ 
\tilde U:\begin{array}{ll}
V &\to \mathbb L^2(\Omega) \\
F & \mapsto \int_{[0,T]\times \X} \E_{t-}[\bD_{(t,x)} F] \tilde{\bN}(dt,dx).
\end{array}
$$
For any $F \in V$, we have proved that $F$ enjoys Representation (\ref{eq:COtemp1}) and Relation (\ref{eq:Isometry}) implies that
$$\|U(F)\|_{L^2(\mathbb L^2_{\mathcal P}\times \real_+)}^2= \|F\|_{L^2(\Omega)}^2; \quad \|\tilde U(F)\|_{L^2(\Omega)}^2 = \E[F^2]-\E[F]^2 \leq \|F\|_{L^2(\Omega)}^2.$$
Let $F \in L^2(\Omega)$. There exists $(F_n)_n \subset V$ such that $\lim_{n\to+\infty} \E[|F_n-F|^2]=0$. Using the previous isometries the sequences $(U(F_n))_n$ and $(\tilde U(F_n))_n$ are Cauchy and thus converging respectively in $L^2(\mathbb L^2_{\mathcal P}\times \real_+)$ and $L^2(\Omega)$. Let $U(F):=(U^1(F),U^2(F))$ and $\tilde{U}(F))$ their limits where $U^1$ and $U^2$ denote the two components of the limits. In addition, the $L^2(\Omega\times [0,T]\times \X)$-convergence of the process $U^1(F_n)$ implies that $U^1(F)$ is $\mathcal P$-measurable. Hence the process $t\mapsto \int_{[0,t]\times\X} U^1(F)_{(r,x)} \tilde{\bN}(dr,dx)$ is a martingale and 
\begin{align*}
\E\left[\left|\int_{[0,T]\times\X} U^1(F)_{(r,x)} \tilde{\bN}(dr,dx)\right|^2\right] &= \E\left[\int_{[0,T]\times\X} |U^1(F)_{(r,x)}|^2 dr \pi(dx)\right] \\
&= \lim_{n\to+\infty} \E\left[\int_{[0,T]\times\X} |U^1(F_n)_{(r,x)}|^2 dr \pi(dx)\right] \\
&= \lim_{n\to+\infty} \E[F_n^2]-\E[F_n]^2 = \E[F^2]-\E[F]^2.
\end{align*}
Thus
\begin{align*}
&\E\left[\left|F-\E[F]-\int_{[0,T]\times\X} U^1(F)_{(r,x)} \tilde{\bN}(dr,dx)\right|^2\right]\\
&\leq \lim_{n\to+\infty} (\E\left[\left|F-F_n\right|^2\right]+ |\E[F]-\E[F_n]|)\\
&+\lim_{n\to+\infty} \E\left[\left|\int_{[0,T]\times\X} U^1(F_n)_{(r,x)} \tilde{\bN}(dr,dx)-\int_{[0,T]\times\X} U^1(F)_{(r,x)} \tilde{\bN}(dr,dx)\right|\right]\\
&\leq \lim_{n\to+\infty} \E\left[\int_{[0,T]\times\X} |U^1(F_n)_{(r,x)}-U^1(F)|^2 dr \pi(dx)\right]\\
&\leq \lim_{n\to+\infty} \|U^1(F_n)_{(r,x)}-U^1(F)\|_{L^2(L^2_{\mathcal P}\times \real_+)}^2=0.
\end{align*}
The proof is complete. 
\end{proof}

\bibliographystyle{alpha-fr}
\bibliography{biblioHPR}

\end{document}